\DeclareFontFamily{OT1}{pzc}{}
  \DeclareFontShape{OT1}{pzc}{m}{it}{<-> s * [1.200] pzcmi7t}{}
  \DeclareMathAlphabet{\mathpzc}{OT1}{pzc}{m}{it}
\newcommand{\Z}{\mathbb{Z}}
\renewcommand{\P}{\mathbb{P}}
\newcommand{\X}{\mathfrak{X}}
\renewcommand{\O}{\mathcal{O}}
\renewcommand{\P}{\mathbb{P}}
\DeclareFontFamily{U}{wncy}{}
    \DeclareFontShape{U}{wncy}{m}{n}{<->wncyr10}{}
    \DeclareSymbolFont{mcy}{U}{wncy}{m}{n}
    \DeclareMathSymbol{\Sha}{\mathord}{mcy}{"58} 
\newcommand{\ind}{\ensuremath{\operatorname{ind}}}
\newcommand{\Div}{\ensuremath{\operatorname{Div}}}
\newcommand{\Prin}{\ensuremath{\operatorname{Prin}}}
\newcommand{\Pic}{\ensuremath{\operatorname{Pic}}}
\newcommand{\PPic}{\ensuremath{\operatorname{\mathbf{Pic}}}}
\renewcommand{\epsilon}{\varepsilon}
\newtheorem{thm}{Theorem}[section]
\newtheorem{prop}[thm]{Proposition}
\newtheorem{lem}[thm]{Lemma}
\newtheorem*{defn}{Definition}
\newtheorem{cor}[thm]{Corollary}
\newtheorem*{rem}{Remark}
  \let\oldrem\rem
  \renewcommand{\rem}{\oldrem\normalfont}
\newtheorem{ex}[thm]{Example}
  \let\oldex\ex
  \renewcommand{\ex}{\oldex\normalfont}
\definecolor{nodecol}{RGB}{200,0,0}
\definecolor{compcol}{RGB}{0,0,200}
\def\fourcurvessetup{
    \def\pt{{\filldraw[compcol] (0,0) circle (2pt);}};
    \node (vt) at (0,2.5) {};
    \node (vtl) at (-.5,3.75) {};
    \node (vtr) at (.5,3.75) {};
    
    \node (vr) at (2.5,0) {};
    \node (vrt) at (3.75,.5) {};
    \node (vrb) at (3.75,-.5) {};
    
    \node (vb) at (0,-2.5) {};
    \node (vbl) at (-.5,-3.75) {};
    \node (vbr) at (.5,-3.75) {};
    
    \node (vl) at (-2.5,0) {};
    \node (vlt) at (-3.75,.5) {};
    \node (vlb) at (-3.75,-.5) {};
}
\def\fourcurvesfinish{
    \filldraw[nodecol] (vt) circle (4pt);
    \filldraw[nodecol] (vr) circle (4pt);
    \filldraw[nodecol] (vl) circle (4pt);
    \filldraw[nodecol] (vb) circle (4pt);
    \draw (vt) circle (4pt);
    \draw (vr) circle (4pt);
    \draw (vl) circle (4pt);
    \draw (vb) circle (4pt);
    
    \node (atl) at (-0.5, 2.1) {};
    \node (atr) at (0.5, 2.1) {};
    \draw[nodecol,dashed, postaction={decorate}, decoration={markings,mark=at position .65 with
        {\arrow[rotate=-11,scale=1.5,>=Stealth]{>}}}]  (atl) to[bend right] (atr);
        
    \node (art) at (2.1,.5) {};
    \node (arb) at (2.1,-.5) {};
    \draw[nodecol,dashed, postaction={decorate}, decoration={markings,mark=at position .65 with
        {\arrow[rotate=-11,scale=1.5,>=Stealth]{>}}}]  (art) to[bend right] (arb);
        
    \node (abl) at (-0.5, -2.1) {};
    \node (abr) at (0.5, -2.1) {};
    \draw[nodecol, dashed, postaction={decorate}, decoration={markings,mark=at position .65 with
        {\arrow[rotate=-11,scale=1.5,>=Stealth]{>}}}]  (abr) to[bend right] (abl);
        
    \node (alt) at (-2.1,.5) {};
    \node (alb) at (-2.1, -.5) {};
    \draw[nodecol,dashed, postaction={decorate}, decoration={markings,mark=at position .65 with
        {\arrow[rotate=11,scale=1.5,>=Stealth]{>}}}]  (alt) to[bend left] (alb);
}
\def\dualgraphsetup{
    \node (grtl) at (-1.75,1.75) {};
    \node (grbl) at (-1.75,-1.75) {};
    \node (grtr) at (1.75,1.75) {};
    \node (grbr) at (1.75,-1.75) {};
    
    \draw[nodecol, dashed, postaction={decorate},decoration={
        markings,
        mark=at position .55 with {\arrow[scale=2.5,>=stealth]{>}}}
      ] (grtl) -- (grtr);
    \draw[nodecol, dashed, postaction={decorate},decoration={
        markings,
        mark=at position .55 with {\arrow[scale=2.5,>=stealth]{>}}}
      ]  (grtr) -- (grbr);
    \draw[nodecol, dashed, postaction={decorate},decoration={
        markings,
        mark=at position .55 with {\arrow[scale=2.5,>=stealth]{>}}}
      ]   (grbr) -- (grbl);
    \draw[nodecol, dashed, postaction={decorate},decoration={
        markings,
        mark=at position .45 with {\arrowreversed[scale=2.5,>=stealth]{>}}}
      ]   (grbl) -- (grtl);
    
    \filldraw[compcol] (grtl) circle (3pt);
    \filldraw[compcol] (grtr) circle (3pt);
    \filldraw[compcol] (grbl) circle (3pt);
    \filldraw[compcol] (grbr) circle (3pt);
}
\def\fig1 {
	\def\toplabel{$1$};
	\def\rightlabel{$-2$};
	\def\bottomlabel{$0$};
	\def\leftlabel{$1$};
	
	\begin{scope}
		\fourcurvessetup
		
		\node[nodecol] at (0,3.2) {\toplabel};
        \node[nodecol] at (3.2,0) {\rightlabel};
        \node[nodecol] at (0,-3.2) {\bottomlabel};
        \node[nodecol] at (-3.2,0) {\leftlabel};
		
		\draw[compcol,postaction={decorate}, decoration={markings, 
			mark=at position .4 with \pt, 
			mark=at position .5 with \pt}]  plot[smooth, tension=.8] coordinates {(vtl) (vt) (vr) (vrb)};
		\node[compcol] at (.85,2) {$1$};
		\node[compcol] at (1.4,1.5) {$-4$};
		
		\draw[compcol,postaction={decorate}, decoration={markings, 
			mark=at position .4 with \pt}]  plot[smooth, tension=.8] coordinates {(vrt) (vr) (vb) (vbl)};
		\node[compcol] at (1.8,-1.1) {$3$};
		
		\draw[compcol] plot[smooth, tension=.8] coordinates {(vbr) (vb) (vl) (vlt)};
		
		\draw[compcol,postaction={decorate}, decoration={markings, 
			mark=at position .38 with \pt, 
			mark=at position .6 with \pt}]  
			plot[smooth, tension=.8] coordinates {(vlb) (vl) (vt) (vtr)};
		\node[compcol] at (-2,1) {$2$};
		\node[compcol] at (-1,2) {$-2$};
		\fourcurvesfinish
	\end{scope}
	\begin{scope}[xshift=8cm]
		\dualgraphsetup
		\node[compcol] at (-2,2) {$0$};
		\node[compcol] at (2,2) {$-3$};
		\node[compcol] at (2,-2) {$3$};
		\node[compcol] at (-2,-2) {$0$};
		\node[nodecol] at (0,2.2) {\toplabel};
		\node[nodecol] at (2.2,0) {\rightlabel};
		\node[nodecol] at (0,-2.2) {\bottomlabel};
		\node[nodecol] at (-2.2,0) {\leftlabel};
	\end{scope}
}
\def\eighttriangles { 
	\pgfmathsetmacro{\xspacing}{3.5};
	\pgfmathsetmacro{\yspacing}{-4};
	
	\foreach \x in {0,1,2,3}
	\foreach \y in {0,1}
	{
	    \node (v1\x\y) at (1+\x*\xspacing,1.732+\y*\yspacing) {};
	    \node (v2\x\y) at (0+\x*\xspacing,\y*\yspacing) {};
	    \node (v3\x\y) at (2+\x*\xspacing,\y*\yspacing) {};
	    \node (v1r\x\y) at (1.04+\x*\xspacing,1.732+\y*\yspacing) {};
	    \node (v1l\x\y) at (0.96+\x*\xspacing,1.732+\y*\yspacing) {};
	    \node (v2r\x\y) at (0.04+\x*\xspacing,\y*\yspacing) {};
	    \node (v2l\x\y) at (-0.04+\x*\xspacing,\y*\yspacing) {};
	    \node (v3r\x\y) at (2.04+\x*\xspacing,\y*\yspacing) {};
	    \node (v3l\x\y) at (1.96+\x*\xspacing,\y*\yspacing) {};
	    \node (label\x\y) at (1+\x*\xspacing,-1+\y*\yspacing) {};
	}

	    \begin{scope}[decoration={
	    markings,
	    mark=at position 0.6 with {\arrow{>[length=6pt,width=8pt]}}}]
		    \draw (v1r00) -- (v3r00) node [above=4pt, right=2pt, midway] {$\color{red}1$};
		    \draw[red] (v1l00) -- (v3l00);
		    \draw[red]  (v1r00) -- (v2r00);
		    \draw  (v1l00) -- (v2l00) node [above=4pt, left=2pt, midway] {$\color{red}1$};
		    \draw[red,dashed] (v200) -- (v300)  node [below=4pt, midway] {$1$};
		    \draw[draw=none,postaction={decorate}]  (v100) -- (v300);
		    \draw[draw=none,postaction={decorate}]  (v200) -- (v100);
		    \draw[draw=none, postaction={decorate}] (v200) -- (v300);
		    
		    \draw[red] (v1r10) -- (v3r10) node [above=4pt, right=2pt, midway] {$\color{red}2$};
		    \draw[red] (v1l10) -- (v3l10);
		    \draw[red]  (v1r10) -- (v2r10);
		    \draw[red] (v1l10) -- (v2l10) node [above=4pt, left=2pt, midway] {$\color{red}2$};
		    \draw[red,dashed] (v210) -- (v310)  node [below=4pt, midway] {$1$};
		    \draw[draw=none,postaction={decorate}]  (v110) -- (v310);
		    \draw[draw=none,postaction={decorate}]  (v210) -- (v110);
		    \draw[draw=none, postaction={decorate}] (v210) -- (v310);
		    
		    \draw[red] (v1r20) -- (v3r20) node [above=4pt, right=2pt, midway] {$\color{red}2$};
		    \draw[red] (v1l20) -- (v3l20);
		    \draw[red]  (v1r20) -- (v2r20);
		    \draw  (v1l20) -- (v2l20) node [above=4pt, left=2pt, midway] {$\color{red}1$};
		    \draw[red,dashed] (v220) -- (v320)  node [below=4pt, midway] {$1$};
		    \draw[draw=none,postaction={decorate}]  (v120) -- (v320);
		    \draw[draw=none,postaction={decorate}]  (v220) -- (v120);
		    \draw[draw=none, postaction={decorate}] (v220) -- (v320);
		    
		    \draw (v1r30) -- (v3r30) node [above=4pt, right=2pt, midway] {$\color{red}1$};
		    \draw[red] (v1l30) -- (v3l30);
		    \draw[red]  (v1r30) -- (v2r30);
		    \draw[red]  (v1l30) -- (v2l30) node [above=4pt, left=2pt, midway] {$\color{red}2$};
		    \draw[red,dashed] (v230) -- (v330) node [below=4pt, midway] {$1$};
		    \draw[draw=none,postaction={decorate}]  (v130) -- (v330);
		    \draw[draw=none,postaction={decorate}]  (v230) -- (v130);
		    \draw[draw=none, postaction={decorate}] (v230) -- (v330);
		    
		    \draw[red,dashed] (v1r01) -- (v3r01) node [above=4pt, right=2pt, midway] {$\color{red}2$};
		    \draw[red,dashed] (v1l01) -- (v3l01);
		    \draw[red]  (v1r01) -- (v2r01);
		    \draw[red]  (v1l01) -- (v2l01) node [above=4pt, left=2pt, midway] {$\color{red}2$};
		    \draw[red] (v201) -- (v301)  node [below=4pt, midway] {$1$};
		    \draw[draw=none,postaction={decorate}]  (v301) -- (v101);
		    \draw[draw=none,postaction={decorate}]  (v201) -- (v101);
		    \draw[draw=none, postaction={decorate}] (v201) -- (v301);
		    
		    \draw[red] (v1r11) -- (v3r11) node [above=4pt, right=2pt, midway] {$\color{red}2$};
		    \draw[red] (v1l11) -- (v3l11);
		    \draw[red,dashed]  (v1r11) -- (v2r11);
		    \draw[red,dashed]  (v1l11) -- (v2l11) node [above=4pt, left=2pt, midway] {$\color{red}2$};
		    \draw[red] (v211) -- (v311)  node [below=4pt, midway] {$1$};
		    \draw[draw=none,postaction={decorate}]  (v311) -- (v111);
		    \draw[draw=none,postaction={decorate}]  (v111) -- (v211);
		    \draw[draw=none, postaction={decorate}] (v211) -- (v311);
		    
		    \draw[red,dashed] (v1r21) -- (v3r21) node [above=4pt, right=2pt, midway] {$\color{red}2$};
		    \draw[red,dashed] (v1l21) -- (v3l21);
		    \draw[red]  (v1r21) -- (v2r21);
		    \draw  (v1l21) -- (v2l21) node [above=4pt, left=2pt, midway] {$\color{red}1$};
		    \draw[red] (v221) -- (v321)  node [below=4pt, midway] {$1$};
		    \draw[draw=none,postaction={decorate}]  (v321) -- (v121);
		    \draw[draw=none,postaction={decorate}]  (v221) -- (v121);
		    \draw[draw=none, postaction={decorate}] (v221) -- (v321);
		    
		    \draw (v1r31) -- (v3r31) node [above=4pt, right=2pt, midway] {$\color{red}1$};
		    \draw[red] (v1l31) -- (v3l31);
		    \draw[red,dashed]  (v1r31) -- (v2r31);
		    \draw[red,dashed]  (v1l31) -- (v2l31) node [above=4pt, left=2pt, midway] {$\color{red}2$};
		    \draw[red] (v231) -- (v331)  node [below=4pt, midway] {$1$};
		    \draw[draw=none,postaction={decorate}]  (v331) -- (v131);
		    \draw[draw=none,postaction={decorate}]  (v131) -- (v231);
		    \draw[draw=none, postaction={decorate}] (v231) -- (v331);
	    \end{scope}

	\foreach \x in {0,1,2,3}
	\foreach \y in {0,1}
	{   
	    \node[circle,draw,fill=white, inner sep=6pt] at (v2\x\y) {};
	    \node[circle,draw,fill=white, inner sep=6pt] at (v3\x\y) {};
	    \node[circle,draw,fill=white,double,double distance=1.7pt, inner sep=6pt] at (v1\x\y) {};
    }
    
    \node at (label00) {};
    \node at (label10) {(trivial)};
    \node at (label20) {};
    \node at (label30) {};
    \node at (label01) {(trivial)};
    \node at (label11) {(trivial)};
    \node at (label21) {};
    \node at (label31) {};
    
    \node at (v100) {$0$};
    \node at (v200) {$0$};
    \node at (v300) {$1$};
    
    \node at (v110) {$0$};
    \node at (v210) {$-1$};
    \node at (v310) {$2$};
    
    \node at (v120) {$-1$};
    \node at (v220) {$0$};
    \node at (v320) {$2$};
    
    \node at (v130) {$1$};
    \node at (v230) {$-1$};
    \node at (v330) {$1$};
    
    \node at (v101) {$2$};
    \node at (v201) {$-1$};
    \node at (v301) {$0$};
    
    \node at (v111) {$0$};
    \node at (v211) {$1$};
    \node at (v311) {$0$};
    
    \node at (v121) {$1$};
    \node at (v221) {$0$};
    \node at (v321) {$0$};
    
    \node at (v131) {$-1$};
    \node at (v231) {$1$};
    \node at (v331) {$1$};
    
    \draw [decorate,decoration={brace,amplitude=10pt},xshift=-4pt,yshift=0pt]
	(6.25,-5.5) -- (-0.25,-5.5) node [black,midway,yshift=-0.6cm] 
	{$4$ balanced edge sub-weightings};
}
\title{An arithmetic variant of Raynaud's theorem}
\author{Jonathan Love and Libby Taylor}
\date{\today}
\begin{document}

\maketitle

\begin{abstract}
    It is well known that for a regular semistable curve $\mathfrak X$ over a DVR with algebraically closed residue field, the spanning trees of the dual graph of the special fiber of $\mathfrak X$ are in bijection with components of the special fiber of the N\'eron model of the Jacobian of $\mathfrak X$.  We prove a generalization of this fact that does not require the residue field to be algebraically closed, using a combinatorially enriched version of the dual graph to encode arithmetic information about divisors on $\mathfrak X$.
\end{abstract}

\section{Introduction}

Kirchhoff's matrix-tree theorem states that for any graph $G$, the set of spanning trees of $G$ is in bijection with the elements of $\Pic^0(G)$ (degree $0$ divisors on $G$ modulo principal divisors; see Section~\ref{sec:background} for a full definition).  The original proof~\cite{kirchhoff} is almost entirely linear-algebraic, and does not give a combinatorially defined bijection between these sets.  Bernardi provided such a combinatorial interpretation~\cite{bernardi,bernardi2}, in which he produces a (non-canonical) action of $\Pic^0(G)$ on the set of spanning trees (discussed in Section~\ref{sec:background}).

On the algebrogeometric side, Raynaud's theorem gives a description of the component group of the generalized Jacobian of a curve. For any smooth curve $X$ over the fraction field of a DVR $R$ with algebraically closed residue field, let $\mathfrak X$ denote a regular semistable model of $X$ over $R$. Then $\Phi_X$, the geometric component group of the special fiber of the N\'eron model of the Jacobian of $X$, admits a canonical isomorphism with $\Pic^0(G)$, where $G$ is the dual graph of the special fiber of $\mathfrak X$.  (For a proof of this fact, see Appendix A of~\cite{matt}, in which the relation between the arithmetic and tropical geometry of this problem is spelled out in detail.)

By combining the two theorems above, we get a relationship between divisors of $X$ and spanning trees of the dual graph $G$ of the special fiber of $\mathfrak X$. We will investigate this relationship directly, giving new proofs for some known results and providing analogues of these results in the case that the residue field of the DVR is not algebraically closed. Specifically, using intersection theory on $\mathfrak X$, we define a subgroup $\Pic^{(0)}(X)$ of $\Pic^0(X)$ and a subgroup $\Pic_b^0(G)$ of $\Pic^0(G)$, and prove the following result:
\begin{restatable}{rethm}{arithraynaud}\label{thm:arithraynaud}
    Let $R$ be a henselian DVR with fraction field $K$ and residue field $k$. Let $X$ be a smooth curve over $K$, and $\mathfrak{X}$ a regular semistable model for $X$ over $R$. Then
    \[\Pic^0(X)/\Pic^{(0)}(X)\cong \Pic_b^0(G).\]
\end{restatable}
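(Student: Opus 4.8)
The plan is to construct an explicit \emph{specialization homomorphism} $\sigma\colon \Pic^0(X)\to \Pic_b^0(G)$ by means of intersection theory on the regular model $\mathfrak{X}$, and then to identify its kernel with $\Pic^{(0)}(X)$ and its image with all of $\Pic_b^0(G)$; the theorem then follows from the first isomorphism theorem. First I would use that $\mathfrak{X}$ is regular, so that restriction to the generic fiber $\Pic(\mathfrak{X})\to\Pic(X)$ is surjective: every $L\in\Pic^0(X)$ admits an extension $\overline{L}\in\Pic(\mathfrak{X})$. To such an extension I associate its \emph{multidegree}, the vector of intersection numbers $\big((\overline{L}\cdot C_i)\big)_i$ indexed by the components $C_i$ of the special fiber, viewed as a divisor on the vertex set of $G$.

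Two things must be checked to see that this lands in $\Pic_b^0(G)$. The total degree vanishes because $\sum_i(\overline{L}\cdot C_i)=(\overline{L}\cdot \mathfrak{X}_k)=(\overline{L}\cdot\div \pi)$, which equals $\deg(L)=0$ by flatness, as the fiber degree is constant in the family. The \emph{balanced} condition is where the arithmetic over the non-closed residue field $k$ enters: since each intersection number $(\overline{L}\cdot C_i)$ is the degree over $k$ of a line bundle on $C_i$, it is forced to be divisible by the residue degree $[\kappa(C_i):k]$, and at the nodes the node residue degrees impose the corresponding edge constraints. This is precisely the combinatorial balancing built into $\Pic_b^0(G)$.

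For well-definedness I would observe that any two extensions of $L$ differ by a vertical divisor $\sum_i a_i C_i$, whose multidegree is $\big(\sum_i a_i(C_i\cdot C_j)\big)_j$, i.e.\ the image of $(a_i)$ under the weighted intersection matrix $M_{ij}=(C_i\cdot C_j)$. Because $\mathfrak{X}_k=\div\pi$ is principal we have $\sum_j(C_i\cdot C_j)=0$, so $M$ is the Laplacian of the enriched graph $G$ and its image is exactly $\Prin(G)$. Hence $\sigma$ is a well-defined homomorphism to $\Pic_b^0(G)$, and by construction its kernel consists of those $L$ whose image is trivial, i.e.\ which admit an extension with vanishing multidegree; this is the intersection-theoretic definition of $\Pic^{(0)}(X)$.

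The main work, and the step I expect to be hardest, is surjectivity: every balanced degree-$0$ divisor on $G$ must be realized as the multidegree of some extension. Here I would produce divisors on $X$ by hand, taking closures of well-chosen closed points of $X$ (horizontal divisors), and compute how these meet the components $C_i$; the residue degree of such a point controls which component it specializes into and with what multiplicity, and matching these against the balancing constraints should exhaust $\Pic_b^0(G)$. The henselian hypothesis on $R$ is essential precisely here, as it guarantees that closed points of the special fiber lift to horizontal divisors with prescribed reduction, so that the combinatorial balancing condition is not merely necessary but also sufficient. The delicate bookkeeping is the treatment of non-split nodes and of components that are geometrically reducible, where the branch and conjugacy structure must be reconciled with the edge weighting; this is exactly the content encoded by the ``balanced edge sub-weightings,'' and verifying that these account for the full image is where the argument will require the most care.
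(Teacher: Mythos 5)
Your overall strategy is the same as the paper's: both arguments run the intersection-theoretic specialization map $\rho(\mathcal D)=\sum_i(C_i\cdot\mathcal D)[C_i]$, identify the multidegrees of vertical divisors with $\Prin(G)$ via the intersection matrix, and use the henselian hypothesis to lift closed points of the special fiber to horizontal divisors realizing $\ind(C_i)[C_i]$ (the paper packages this as Lemma~\ref{lem:specialdiv} and then does a diagram chase rather than invoking the first isomorphism theorem directly, but the content is identical). Two smaller corrections: the vertex weight governing the balancing condition is the \emph{index} $\ind(C_i)$ (the gcd of degrees of closed points on the normalization), not the degree of a field of constants --- these can differ, e.g.\ for a pointless conic over $\R$ --- and the balancing condition on divisors is purely a vertex condition; the node degrees $[k(p):k]$ enter only through the weighted Laplacian, i.e.\ through $\Prin(G)$, exactly as in your well-definedness step.

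The genuine gap is in your identification of the kernel. The paper defines $\Pic^{(0)}(X)$ as the image in $\Pic(X)$ of $\Div(X)\cap\ker\rho$, where $\Div(X)$ sits inside $\Div(\mathfrak X)$ as the \emph{horizontal} divisors (Zariski closures). Your kernel, by contrast, consists of classes admitting \emph{some} extension to $\mathfrak X$ (horizontal plus vertical) with vanishing multidegree, equivalently classes of horizontal divisors $D$ with $\rho(\overline D)\in\Prin(G)$. The inclusion $\Pic^{(0)}(X)\subseteq\ker\sigma$ is clear, but the reverse inclusion requires knowing that every element of $\Prin(G)$ is $\rho(\overline{\div(f)})$ for some rational function $f$ on $X$, so that the vertical correction can be traded for a linear equivalence on $X$. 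This is the paper's Lemma~\ref{lem:specialprin}, and it is not automatic: one starts from a vertical divisor $\mathcal V$ realizing the given chip-firing move and must use a moving lemma (Lemma~\ref{lem:shafarevich}, Shafarevich's argument for regular projective schemes over a DVR) to replace $\mathcal V$ by a linearly equivalent horizontal divisor on $\mathfrak X$. Without this step your argument proves $\Pic^0(X)/\ker\sigma\cong\Pic^0_b(G)$ for a kernel that is a priori larger than $\Pic^{(0)}(X)$. The surjectivity sketch is otherwise sound, though you should also note that the chosen closed points on (the normalization of) $C_i$ must be moved off the nodes before lifting, again via the moving lemma.
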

This can be thought of as an ``arithmetic variant'' of Raynaud's Theorem, which can be stated and proved any reference to N\'eron models or Jacobians. We then use the theory of N\'eron models to establish an isomorphism $\Pic^0(X)/\Pic^{(0)}(X)\cong\Phi_X$ under appropriate conditions, allowing us to prove the following (geometric) variant of Raynaud's Theorem that does not require $k$ to be algebraically closed:

\begin{restatable}{rethm}{raynaud}\label{thm:raynaud}
    Let $R$ be a DVR with fraction field $K$ and residue field $k$. Let $X$ be a smooth, geometrically irreducible curve over $K$, and $\mathfrak{X}$ a regular semistable model for $X$ over $R$. Suppose every irreducible component of $\mathfrak{X}_k$ has a smooth $k$-point. Then $\Pic^0(G) \cong \Phi_X$, where $\Phi_X$ is the component group of the special fiber of the N\'eron model of the Jacobian of $X$.
\end{restatable}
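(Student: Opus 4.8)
The plan is to deduce Theorem~\ref{thm:raynaud} from Theorem~\ref{thm:arithraynaud} by supplying two further ingredients: a purely combinatorial identity $\Pic_b^0(G) = \Pic^0(G)$ valid under the smooth-point hypothesis, and a geometric identification $\Pic^0(X)/\Pic^{(0)}(X) \cong \Phi_X$ coming from the theory of N\'eron models (the ``appropriate conditions'' alluded to above). Chaining these with the isomorphism of Theorem~\ref{thm:arithraynaud} yields
\[
\Pic^0(G) = \Pic_b^0(G) \cong \Pic^0(X)/\Pic^{(0)}(X) \cong \Phi_X,
\]
as desired. Here geometric irreducibility of $X$ is what guarantees that $\Jac(X)$ is an abelian variety, so that its N\'eron model and the component group $\Phi_X$ are defined.

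First I would reconcile the hypotheses: Theorem~\ref{thm:arithraynaud} requires $R$ henselian, whereas here $R$ is an arbitrary DVR. I would base change along $R \to R^h$ to the henselization. Since $\mathfrak{X}\times_R R^h$ remains a regular semistable model with unchanged special fiber, the enriched dual graph $G$ — and hence both $\Pic^0(G)$ and the balancing data defining $\Pic_b^0(G)$ — is unchanged; the smooth $k$-point hypothesis persists because the residue field $k$ is not altered. Moreover the N\'eron model of $\Jac(X)$ commutes with the (ind-\'etale) base change $R\to R^h$, so its special fiber, and with it $\Phi_X$, is unaffected. As both sides of the target isomorphism are insensitive to henselization, it suffices to prove it after replacing $R$ by $R^h$; I may therefore assume $R$ henselian and apply Theorem~\ref{thm:arithraynaud} directly.

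Next, for the combinatorial step, I would show that the smooth-point hypothesis forces $\Pic_b^0(G) = \Pic^0(G)$. The subgroup $\Pic_b^0(G)$ is cut out by the ``balanced'' edge-weighting conditions, whose weights record the arithmetic of the nodes and components of $\mathfrak{X}_k$ — essentially the degrees of the relevant residue-field extensions. The existence of a smooth $k$-rational point on every component trivializes these weights, since each component is split in the relevant sense, so the balancing conditions become vacuous and every degree-$0$ divisor class on $G$ is automatically balanced.

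Finally, and this is where I expect the main difficulty to lie, I would establish $\Pic^0(X)/\Pic^{(0)}(X)\cong \Phi_X$. The subgroup $\Pic^{(0)}(X)$ is defined via intersection theory on $\mathfrak{X}$ precisely so as to be the kernel of the specialization map $\Pic^0(X)\to \Phi_X$ obtained by extending a degree-$0$ line bundle to the regular model, restricting to the special fiber, and projecting to the component group of the N\'eron model; checking this kernel description is a bookkeeping exercise relating intersection numbers to multidegrees on $\mathfrak{X}_k$. The genuine obstacle is \emph{surjectivity} onto $\Phi_X$: one must show every component of the special fiber of the N\'eron model is met by the reduction of a $K$-rational divisor class. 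This is exactly where the smooth $k$-point hypothesis is essential — it allows one to build, from smooth $k$-points, line bundles on $\mathfrak{X}$ realizing any prescribed multidegree on $\mathfrak{X}_k$ (equivalently, it forces the relevant cohomological obstruction over the henselian base to vanish), so that the reduction map $\Pic^0(X)\to\Phi_X$ is onto. Assembling the kernel and surjectivity statements gives the isomorphism, and the chain of identifications above completes the proof.
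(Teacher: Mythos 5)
Your proposal follows essentially the same route as the paper: reduce to the henselian case by base change to $R^h$ (both the weighted dual graph and $\Phi_X$ are unaffected), note that a smooth $k$-point on each component forces $\ind(C_i)=1$ so that the balancing condition --- which is a condition on vertex weights (component indices), not edge weights --- becomes vacuous and Theorem~\ref{thm:arithraynaud} gives $\Phi_X^a\cong\Pic^0(G)$, and finally identify $\Phi_X^a$ with $\Phi_X$. The one step you leave as a sketch, $\Pic^0(X)/\Pic^{(0)}(X)\cong\Phi_X$, is exactly the paper's Proposition~\ref{prop:arithgeocomp} and is rather more than bookkeeping: the paper realizes the N\'eron model as the maximal separated quotient $Q$ of the degree-zero relative Picard functor $P$, identifies $P^\circ(R)$ with $(\ker\rho)/\Prin(\mathfrak X)$ via partial degrees, and compares $Q(R)/Q^\circ(R)$ with $Q(\overline{k})/Q^\circ(\overline{k})$, with your surjectivity idea (realizing any multidegree from smooth $k$-points, i.e.\ Lemma~\ref{lem:specialdiv}) entering precisely as you predict.
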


Here ``every irreducible component has a smooth $k$-point'' means smooth as a point in the component, not necessarily as a point in $\mathfrak X_k$ (that is, the $k$-point may be an intersection point with another component).

It should be noted that Theorem~\ref{thm:raynaud} is not new; it was originally proved in~\cite{boschliu}, and we have recreated their results in somewhat different language.  We give a new application of this theorem, though, and use it to define a set of combinatorial objects which are a torsor under the arithmetic component group.

In general, the relationship between line bundles on the curve and divisors on $G$ is not as straightforward as it is when $k$ is algebraically closed. In particular, arithmetic information about $X$ will translate into ``weights'' that are attached to the vertices and edges of $G$. We study these weighted graphs in Section~\ref{sec:weightedgraphs}, and prove a weighted analogue of the matrix-tree theorem (Theorem~\ref{thm:countgraphcosets}), allowing us to establish a torsor action of $\Pic^0(X)/\Pic^{(0)}(X)$ on a set of combinatorially enriched spanning trees (Corollary~\ref{cor:spantreedecomp}).




The paper is organized as follows. In Section~\ref{sec:background}, we introduce the required background in graph theory. In Section~\ref{sec:weightedgraphs}, we introduce weighted graphs and a number of related notions, and prove some results about them, including the matrix-tree analogue Theorem~\ref{thm:countgraphcosets}. In Section~\ref{sec:specialization} we begin discussing semistable curves over a DVR, focusing on properties of the specialization map, a map from divisors on the curve to divisors on the dual graph $G$ of the special fiber. We apply these properties in Section~\ref{sec:arithcomponent} to prove Theorems~\ref{thm:arithraynaud} and~\ref{thm:raynaud}, and combine these results with the results about weighted graphs to prove Corollary~\ref{cor:spantreedecomp}.

\subsection{Acknowledgements}

The authors would like to thank Qing Liu for helpful comments on an earlier version of this paper, and for bringing~\cite{boschliu} to our attention.  The second author would like to thank Trader Joe's for having provided the coffee which formed the raw material for much of this paper.   The first named author would, accordingly, like to thank his mother, who seems to consistently forget that she already sent him a bag of coffee beans relatively recently.

\section{Background on chip-firing}\label{sec:background}

In this section, we will review the necessary graph theory; these theorems will later be applied to the dual graph of the special fiber of a regular proper model of a curve.

Let $G$ be a directed graph on $n$ vertices.  Its $n \times n$ signed adjacency matrix $A$ is defined to to have its $(i,j)$-th entry in $A$ equal to $1$ if there is a directed edge from $v_i$ to $v_j$; $-1$ if there is a directed edge from $v_j$ to $v_i$; and $0$ if no edge exists between the two vertices.  If there are multiple edges between $v_i$ and $v_j$, then the matrix entry is equal to the number of edges oriented $v_i$ to $v_j$ minus the number of edges oriented $v_j$ to $v_i$.

\smallskip

A divisor on a graph $G=(V,E)$ is a function $D: V\to \Z$, which we write in the form $\sum_{v\in V} a_v[v]$ for $a_v\in \Z$.  The set of all divisors of a graph $G$ is denoted $\Div(G)$. The degree of the divisor $D$ is defined as $\deg(D)=\sum_{v\in V} a_v$.  The set of all divisors of degree $k$ is denoted $\Div^k(G)$.  There is a (non-canonical) bijection $\Div^k(G)\to \Div^0(G)$ defined by $D\mapsto D-D_0$ for some fixed reference divisor $D_0\in \Div^k(G)$.  

\smallskip

The \emph{Laplacian operator} on a graph $G$, denoted $\Delta :\Div(G)\to \Div(G)$, is defined by 
\[\Delta (f):=\sum_{v\in V}\left(\sum_{\substack{e\in E \\ e=\{v,w\} }}(f(v)-f(w))\right)[v].\]
If $f$ is $1$ at a single vertex $v$ and $0$ everywhere else, $\Delta(f)$ is called a ``chip-firing move at $v$.'' The group $\text{Prin}(G)$ of principal divisors is the image of the Laplacian operator; note that it is generated by chip-firing moves. We have $\text{Prin}(G)\subseteq \Div^0(G)$, and both $\text{Prin}(G)$ and $\Div^0(G)$ are free abelian groups of rank $n-1$,  so
\[
\Pic^0(G)=\Div^0(G)/\text{Prin}(G)
\]
is a finite group called the \textit{Jacobian} of $G$.

The Jacobian of $G$ is in bijection with the set $T(G)$ of spanning trees of $G$; this is (one form of) the classical matrix-tree theorem. While the theorem was originally proved in~\cite{kirchhoff} using linear algebra (hence the name), it was proved much more recently that there is a combinatorially-defined bijection between these two sets. This result, due to Bernardi~\cite{bernardi2}, follows by establishing a bijection between each of these sets and a particular set of divisors on $G$. We summarize this construction below.

We assign to $G$ a \emph{ribbon structure}: that is, for each vertex $v$, we order the edges incident to $v$ up to cyclic permutation. Intuitively, we may consider drawing $G$ in the plane (with edges allowed to cross each other), and define a ribbon structure by the counterclockwise ordering of edges around each vertex. Now fix a vertex $q\in G$ and an edge $e_0$ incident to $q$. An orientation of $G$ is \emph{$q$-connected} if for every vertex $v\in G$, there is a directed path from $q$ to $v$. 

Given a spanning tree $T$ of $G$, we can define a $q$-connected orientation of $T$ using the following procedure (called a ``tour'' of the spanning tree), beginning with $(v,e):=(q,e_0)$:
\begin{itemize}
    \item If $e$ is not in $T$:
    \begin{enumerate}
        \item If $e$ has not been assigned an orientation yet, orient $e$ towards $v$.
        \item Re-assign $e$ to be the next edge in the ordering around $v$. 
    \end{enumerate}
    \item Otherwise (if $e$ is in $T$):
    \begin{enumerate}
        \item If $e$ has not been assigned an orientation yet, orient $e$ away from $v$.
        \item Re-assign $v$ to be the other vertex incident to $e$.
    \end{enumerate}
\end{itemize}
See Figure~\ref{fig:spanningtrees} for examples of $q$-connected orientations arising from tours of spanning trees. Further discussion of tours (including a visualization) can be found in~\cite{bernardi}. Tours of spanning trees eventually orient every edge of $G$~\cite[Lemma 5]{bernardi}, and the resulting orientation is $q$-connected~\cite[Lemma 5]{bernardi2}.

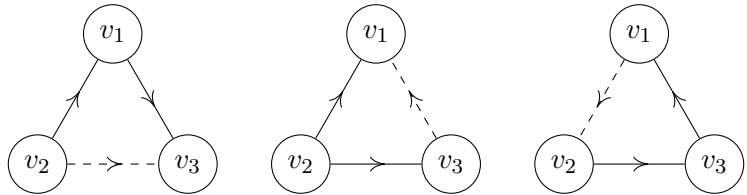
\begin{figure}[h]
    \centering
    \begin{tikzpicture}
    	\begin{scope}[decoration={
        markings,
        mark=at position 0.6 with {\arrow{>[length=2mm,width=2mm]}}}]
        \node[circle,draw] (v2) at (0,0) {$v_2$};
        \node[circle,draw] (v3) at (2,0) {$v_3$};
        \node[circle,draw] (v1) at (1,1.732) {$v_1$};
        \draw[dashed,postaction={decorate}] (v2) to (v3);
        \draw[postaction={decorate}]  (v1) -- (v3);
        \draw[postaction={decorate}]  (v2) -- (v1);
        
        \node[circle,draw] (v21) at (3.5,0) {$v_2$};
        \node[circle,draw] (v31) at (5.5,0) {$v_3$};
        \node[circle,draw] (v11) at (4.5,1.732) {$v_1$};
        \draw[postaction={decorate}]  (v21) to (v31);
        \draw[dashed,postaction={decorate}]  (v31) -- (v11);
        \draw[postaction={decorate}]  (v21) -- (v11);
        
        \node[circle,draw] (v22) at (7,0) {$v_2$};
        \node[circle,draw] (v32) at (9,0) {$v_3$};
        \node[circle,draw] (v12) at (8,1.732) {$v_1$};
        \draw[postaction={decorate}]  (v22) -- (v32);
        \draw[postaction={decorate}]  (v32) -- (v12);
        \draw[dashed,postaction={decorate}]  (v12) -- (v22);
        \end{scope}
    \end{tikzpicture}
    \caption{Spanning Trees for the Triangle Graph, and their corresponding $v_2$-connected orientations; the starting edge $e_0$ in each case is $\{v_2,v_1\}$.}
    \label{fig:spanningtrees}
\end{figure}
    
Now to any orientation $O$, we can associate a divisor $D_O\in \Div^{g-1}(G)$ by placing a coefficient of $\text{indeg}(v)-1$ at each vertex $v$, where $g$ denotes the combinatorial genus of $G$ and $\text{indeg}(v)$ denotes the number of edges pointing towards $v$ according to $O$. A divisor $D$ is \emph{$q$-orientable} if it equals $D_O$ for a $q$-connected orientation $O$. Bernardi proves the following result:

\begin{prop}[{\cite[Theorem 46(5)]{bernardi2}}]\label{prop:q-conn}

The map from spanning trees to $q$-orientable divisors described above is a bijection.\footnote{Bernardi actually establishes a bijection between spanning trees and ``$q$-connected outdegree sequences,'' that is, divisors of the form $\sum_{v\in G} \text{outdeg}(v)[v]$ where the out-degree is defined by a $q$-connected orientation $O$. There is a natural bijection from $q$-connected outdegree sequences to $q$-orientable divisors given by $D\mapsto \sum_{v\in G}(\deg(v)-1)[v]-D$.}

\end{prop}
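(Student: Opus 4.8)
The plan is to treat this as essentially a restatement of \cite[Theorem 46(5)]{bernardi2}, and to reduce it to Bernardi's bijection between spanning trees and $q$-connected outdegree sequences by the purely formal translation indicated in the footnote. For any orientation $O$ and any vertex $v$ one has $\operatorname{indeg}(v)+\operatorname{outdeg}(v)=\deg(v)$, so
\[
D_O=\sum_{v}(\operatorname{indeg}(v)-1)[v]=\sum_{v}(\deg(v)-1)[v]-\sum_{v}\operatorname{outdeg}(v)[v].
\]
Thus the affine map $D\mapsto \sum_{v}(\deg(v)-1)[v]-D$ carries the outdegree sequence of $O$ to $D_O$. This map is an involution of $\Div(G)$, hence a bijection, and by the identity above it carries the set of $q$-connected outdegree sequences bijectively onto the set of $q$-orientable divisors. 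Composing with Bernardi's spanning-tree/outdegree-sequence bijection yields the claim. The only point to verify is that the two ``tour'' constructions are attached to the same orientation: since the tour orients every edge of $G$ exactly once and so produces a single orientation $O_T$, and both the indegree and the outdegree sequences are simply read off from $O_T$, this match is immediate.

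If instead one wanted a self-contained argument, I would work directly with the tour. Well-definedness is already in hand: the cited lemmas guarantee that the tour orients every edge and that $O_T$ is $q$-connected, so $T\mapsto O_T\mapsto D_{O_T}$ lands among the $q$-orientable divisors, and the degree bookkeeping $\deg D_O=|E|-|V|=g-1$ confirms $D_O\in\Div^{g-1}(G)$. For injectivity I would prove the stronger statement that $T$ can be reconstructed from $O_T$: replaying the tour from $(q,e_0)$ using only the ribbon structure and the recorded orientation $O_T$, one can at each step decide whether the current edge is a tree edge (cross it, advancing the vertex) or a non-tree edge (pivot at the current vertex) by testing the recorded orientation against the very rule that produced it, which recovers $T$ uniquely. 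For surjectivity I would take an arbitrary $q$-connected orientation $O$, build a spanning tree by an orientation-guided depth-first traversal from $q$ that crosses edges only in their forward direction and respects the ribbon order, and then check that the tour of the resulting tree reproduces the indegree sequence of $O$, so that $D_{O_T}=D_O$.

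The main obstacle is precisely the inversion in the self-contained route. Distinct $q$-connected orientations can share an indegree sequence (reversing a directed cycle fixes every indegree), so one must show that the tour selects a single \emph{canonical} orientation in each indegree class and that the orientation-guided traversal rebuilds exactly the tree whose tour returns to that canonical orientation. Pinning down this canonicity---that $O_T$ is the unique orientation in its class compatible with the ribbon structure and the base edge $e_0$---is the delicate combinatorial core, and it is exactly what Bernardi's analysis of tours supplies; everything else (the degree count and the involution above) is routine. For the paper's purposes the first route is cleaner, since it quotes that combinatorial core rather than reproving it.
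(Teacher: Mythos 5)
Your primary route is exactly what the paper does: the proposition is quoted from Bernardi, and the only content supplied is the footnote's affine translation $D\mapsto\sum_v(\deg(v)-1)[v]-D$ between outdegree sequences and $q$-orientable divisors, which your identity $\operatorname{indeg}(v)+\operatorname{outdeg}(v)=\deg(v)$ verifies correctly. Your self-contained sketch is extra and rightly set aside, since you identify that its delicate step (canonicity of the tour orientation within its indegree class) is precisely Bernardi's contribution.
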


\begin{rem}
    The set of $q$-connected orientations may not be in bijection with either of the sets in Proposition~\ref{prop:q-conn}. In the situation of Figure~\ref{fig:spanningtrees}, the clockwise orientation of edges is a $q$-connected orientation which does not come from a tour of any spanning tree (the map from spanning trees to $q$-connected orientations is not surjective), and it defines the same divisor as the counterclockwise orientation (the map from $q$-connected orientations to $q$-orientable divisors is not injective).
\end{rem}

Combining Proposition~\ref{prop:q-conn} with the following result, we derive the desired bijection between the set of spanning trees and $\Pic^0(G)$.

\begin{thm}[{\cite[Theorem 4.13]{ABKS}}]\label{thm:abks}
    If $D$ is a degree $g-1$ divisor on $G$, then there is a unique $q$-orientable divisor that is equivalent to $D$ (i.e. differs from $D$ by a sum of chip-firing moves).
\end{thm}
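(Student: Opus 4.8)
The plan is to prove existence and uniqueness separately, viewing the statement as the assertion that the map $O\mapsto [D_O]$ from $q$-connected orientations to $\Pic^{g-1}(G)$ is a bijection. Note first that every $D_O$ has degree $\sum_v(\operatorname{indeg}(v)-1)=|E|-|V|=g-1$, so $\Pic^{g-1}(G)$ is indeed the right target and the statement is well-posed. Throughout I would use only the definitions of $\Pic$ and of the Laplacian together with the combinatorial meaning of $q$-connectivity; in particular I would avoid invoking the matrix--tree theorem, since deducing it is the point of the section.

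For uniqueness, suppose $O_1,O_2$ are $q$-connected with $D_{O_1}\sim D_{O_2}$, say $D_{O_1}-D_{O_2}=\Delta f$ for some $f\colon V\to\Z$, and suppose $f$ is non-constant. Let $S$ be the set on which $f$ attains its maximum, so $\emptyset\ne S\ne V$. Summing the identity $\operatorname{indeg}_{O_1}(v)-\operatorname{indeg}_{O_2}(v)=\Delta f(v)$ over $v\in S$, the internal edges of $S$ cancel on both sides and only the edges of the cut $(S,V\setminus S)$ survive. On the Laplacian side each crossing edge contributes $f(v)-f(w)\ge 1$ (with $v\in S$), so $\sum_{v\in S}\Delta f(v)\ge e(S,V\setminus S)$; on the indegree side the sum equals (cut-edges into $S$ under $O_1$) minus (cut-edges into $S$ under $O_2$), which is $\le e(S,V\setminus S)$. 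The two estimates force equality, so every cut-edge points into $S$ under $O_1$ and out of $S$ under $O_2$. If $q\in S$ this means no directed edge leaves $S$ in $O_1$, so $V\setminus S$ is unreachable from $q$, contradicting $q$-connectivity of $O_1$; if $q\notin S$ then no directed edge enters $S$ in $O_2$, so $S$ is unreachable from $q$, contradicting $q$-connectivity of $O_2$. Hence $f$ is constant and $D_{O_1}=D_{O_2}$.

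For existence I would show every class in $\Pic^{g-1}(G)$ is realized by a $q$-connected orientation. The key lemma is a connectivity surgery: if $R$ is the set of vertices reachable from $q$ under an orientation $O$ and $R\ne V$, then maximality of $R$ forces every cut-edge to point into $R$, and reversing all of them yields an orientation $O'$ with $D_{O'}-D_O=\Delta(\mathbf 1_{V\setminus R})$, a principal divisor, while strictly enlarging the reachable set. Thus any orientation can be made $q$-connected without changing its divisor class. To reach a prescribed class $[D]$, I would start from any $q$-connected orientation (e.g.\ a spanning-tree arborescence rooted at $q$); since the classes $[q]-[v]$ generate $\Pic^0(G)$ and, in this finite group, negatives are nonnegative multiples, I can write $[D]-[D_O]=\sum_v m_v([q]-[v])$ with $m_v\ge 0$. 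Reversing a directed $q\to v$ path (which exists by $q$-connectivity) changes the class by exactly $[q]-[v]$; after each such move I reapply the surgery to restore $q$-connectivity without disturbing the class, and after $\sum_v m_v$ steps I arrive at a $q$-connected orientation in the class $[D]$.

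The uniqueness argument is short and self-contained, so the main work lies in existence. The crux there is the surgery lemma, namely recognizing the indegree change caused by reversing a directed cut as the chip-firing move $\Delta(\mathbf 1_{V\setminus R})$ so that the class is preserved, followed by the bookkeeping that the path-reversals realize the required class difference and that the process terminates. A shorter but less satisfying route to existence would combine Proposition~\ref{prop:q-conn} (which gives $\#\{q\text{-orientable divisors}\}=\#T(G)$) with Kirchhoff's classical count $\#\Pic^0(G)=\#T(G)$ to deduce surjectivity from the injectivity proved above; I would prefer the direct surgery argument precisely to keep the deduction of the matrix--tree theorem non-circular.
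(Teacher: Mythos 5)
Your proof is correct. Note that the paper itself gives no argument for this statement---it is imported verbatim from \cite[Theorem 4.13]{ABKS}---so there is no internal proof to compare against; what you have written is a self-contained replacement for that citation, and it is essentially the standard argument (close to the one in ABKS). The uniqueness half (summing $\operatorname{indeg}_{O_1}-\operatorname{indeg}_{O_2}=\Delta f$ over the set $S$ where $f$ is maximal, so that internal edges cancel and the two cut-edge estimates force every cut edge to point into $S$ under $O_1$ and out of $S$ under $O_2$, contradicting $q$-connectivity whichever side of the cut $q$ lies on) is airtight, and it handles multiple edges and loops correctly, which matters since $G$ is the dual graph of a semistable fiber. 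The existence half is also sound: the surgery identity $D_{O'}-D_O=\Delta(\mathbf{1}_{V\setminus R})$ for reversing the directed cut into the reachable set $R$ checks out, the reachable set strictly grows because $G$ is connected (a standing hypothesis you should state explicitly, since otherwise no $q$-connected orientation exists and the statement must be read componentwise), and reversing a \emph{simple} directed $q\to v$ path changes the divisor by exactly $[q]-[v]$ (simplicity is what keeps the intermediate indegrees fixed, so say so). One more point worth flagging: your step ``negatives are nonnegative multiples'' uses the finiteness of $\Pic^0(G)$, which the paper establishes beforehand from the rank of the Laplacian and not from the matrix-tree theorem, so your concern about circularity is genuinely addressed; your decision to prove existence by surgery rather than by combining Proposition~\ref{prop:q-conn} with Kirchhoff's count is the right one given the role this theorem plays in Section~\ref{sec:background}.
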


The map from $\Div^{g-1}(G)$ to $\Pic^0(G)$ is defined by subtracting a reference divisor $D_0\in \Div^{g-1}(G)$ from each $D\in \Div^{g-1}(G)$.  The reference divisor is taken to be the $q$-orientable divisor associated to some spanning tree $T_0$ of $G$, which allows $T_0$ to be considered as an ``identity element'' of $T(G)$ in this bijection.  This produces a bijection between $\Div^{g-1}(G)/\text{Prin}(G)$ and $\Div^0(G)/\text{Prin}(G) \cong \Pic^0(G)$.  

Thus the Bernardi map gives a combinatorially defined bijection between spanning trees of $G$ and elements of $\Pic^0(G)$, which 
factors as $T(G) \to \Div^{g-1}(G)/\Prin(G) \to \Pic^0(G)$.  Note that this bijection is not canonical, as it depends on the choice of $T_0$ and the choice of $q$.

\smallskip


If $R$ is a DVR with algebraically closed residue field, and $\mathfrak X$ is a regular semistable curve over $R$, then we can study properties of $\mathfrak X$ by taking the dual graph $G$ of the special fiber. In particular, there is a \emph{specialization map} (Section~\ref{sec:specialization}) from divisors on $\mathfrak X$ to divisors on $G$, so the graph theory described above can be used to study the geometry of $\mathfrak X$.

If the residue field is not algebraically closed, however, there may not be as nice of a relationship between divisors on $\mathfrak X$ and on $G$; for instance, there may be no divisor on $\mathfrak X$ that has degree $1$ when restricted to a particular component of the special fiber. In the following section, we develop a theory of weighted graphs that can be used to resolve some of these issues.

\section{A Theory of Weighted Graphs}\label{sec:weightedgraphs}

\subsection{Weights and Balanced Divisors}\label{sec:balanceddivs}

Let $G=(V,E)$ be a multigraph. A function $\omega:V\sqcup E\to \Z_{>0}$ will be called a ``weighting'' of $G$, and the pair $(G,\omega)$ is a \emph{weighted graph}. We say that a weighted graph is \emph{pleasant} if $\omega(v)\mid \omega(e)$ whenever $v$ is a vertex on the edge $e$ (see Figure~\ref{fig:weightedgraph}).

\begin{figure}[h]
    \centering
    \begin{tikzpicture}
        \node[circle,draw] (v2) at (0,0) {$v_2$};
        \node[circle,draw] (v3) at (2,0) {$v_3$};
        \node (v1) at (1,1.732) {};
        \draw[double, double distance=2pt]  (v2) to [bend right] (v3);
        \draw  (v2) to [bend left] (v3);
        \draw[double, double distance=2pt]  (v3) -- (v1);
        \draw[double, double distance=2pt]  (v2) -- (v1);
        \node[circle,fill=white, double,draw,double distance=2pt] (v1) at (1,1.732) {$v_1$};
        
        \node[circle,draw] (w2) at (5,0) {$w_2$};
        \node[circle,draw] (w3) at (7,0) {};
        \node (w1) at (6,1.732) {};
        \draw[double, double distance=2pt]  (w2) to [bend right] (w3);
        \draw  (w2) to [bend left] (w3);
        \draw[double, double distance=2pt]  (w3) -- (w1);
        \draw[double, double distance=2pt]  (w2) -- (w1);
        \node[circle,fill=white,double,draw,double distance=2pt] (w1) at (6,1.732) {$w_1$};
        \node[circle,fill=white,double,draw,double distance=2pt] (w3) at (7,0) {$w_3$};
    \end{tikzpicture}
    \caption{A multigraph with two distinct weightings. Weights are represented as the number of lines along a vertex or edge. The weighting on the left is pleasant, while the weighting on the right is not.}
    \label{fig:weightedgraph}
\end{figure}
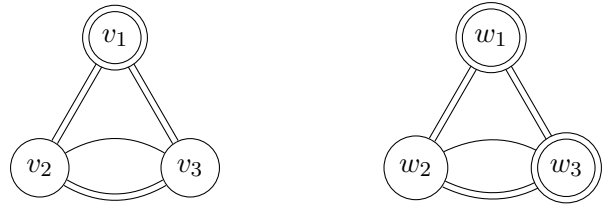

A divisor on on a weighted graph $G$ is a function $D:V\to \Z$; the set of all divisors is denoted $\Div(G)$. We say a divisor is \emph{balanced} if $\omega(v)\mid D(v)$ for all $v$; the group of all balanced divisors is denoted $\Div_b(G)$. We define the set of \emph{unbalancings} of $G$ to be the quotient:
\[\Div_u(G):=\Div(G)/\Div_b(G)\cong\prod_{v\in V} \Z/\omega(v)\Z.\]
Using the examples from Figure~\ref{fig:weightedgraph}, a divisor $D$ on the weighted graph on the left is balanced if and only if the $D(v_1)$ is even, and the set of unbalancings is $\Z/2\Z$.

Let $\omega(G):=\gcd\{\omega(v):v\in V\}$. Taking degrees of divisors, we obtain the following commutative and exact diagram:
\[\begin{tikzcd}
    & 0 \arrow[d]  & 0 \arrow[d]  & 0 \arrow[d] \\
    0 \arrow[r] & \Div_b^0(G) \arrow[r] \arrow[d]  & \Div^0(G) \arrow[r] \arrow[d]  & \Div_u^0(G) \arrow[r] \arrow[d]  & 0 \\
    0 \arrow[r] & \Div_b(G) \arrow[r] \arrow[d, "\deg"]  & \Div(G) \arrow[r] \arrow[d, "\deg"]  & \Div_u(G) \arrow[r] \arrow[d]  & 0 \\
    0 \arrow[r] & \omega(G)\Z \arrow[r] \arrow[d] & \Z \arrow[r] \arrow[d] & \Z/\omega(G)\Z \arrow[r] \arrow[d] & 0 \\
                & 0                              & 0                            & 0.                             &  
    \end{tikzcd}
\]

Now given a weighted graph and some function $f:V\to\Z$, we define the (weighted) Laplacian
\[\Delta (f):=\sum_{v\in V}\left(\sum_{\substack{e\in E \\ e=\{v,w\} }}\omega(e)(f(v)-f(w))\right)[v]\in\Div^0(G).\]
For example, using the pleasantly weighted graph from Figure~\ref{fig:weightedgraph}, the Laplacian of the indicator function of $v_3$ is the divisor $D$ with $D(v_1)=-2$, $D(v_2)=-3$, and $D(v_3)=5$. Let $\Prin(G)$ denote the image of $\Delta$; elements of $\Prin(G)$ are called \emph{principal divisors} or \emph{chip-firing moves} on $G$, and two divisors that differ by a principal divisor are \emph{chip-firing equivalent}.

Note that the definition of ``balanced divisor'' depends only on the weights of vertices, and the definition of ``principal divisor'' depends only on the weights of edges. Pleasant weightings allow us to relate these two notions: if $G$ is pleasantly weighted, then all principal divisors are balanced.

\subsection{Spanning Trees and Sub-weightings}

Given a spanning tree $T=(V,E_T)$ of $G$, we define an \emph{edge sub-weighting} of $T$ to be a function $\sigma:E\to\Z$ with the property that $\sigma(e)=\omega(e)$ if $e\notin E_T$, and $1\leq \sigma(e)\leq\omega(e)$ for all $e\in E_T$. The \emph{trivial sub-weighting} is the one with $\sigma(e)=\omega(e)$ for all $e\in E$. 

Assume for the sake of simplicity that $G$ is connected; we explain how to remove this hypothesis at the end of this section. Fix a vertex $q\in V$ and a ribbon structure of $G$. Given a spanning tree $T$ of $G$, we can construct a $q$-connected orientation of $G$ as described in Section~\ref{sec:background}. If we are additionally given an edge sub-weighting for $T$, we can then define a divisor 
\begin{align*}
    D_{T,\sigma}&:=\sum_{e:w\to v\in E}\Big(\sigma(e)[v]+(\omega(e)-\sigma(e))[w]\Big)-\sum_{v\in V}\omega(v)[v]\\
    &=\sum_{v\in V}\left(\sum_{e\text{ to }v}\sigma(e)+\sum_{e\text{ from }v}(\omega(e)-\sigma(e))-\omega(v)\right)[v].
\end{align*}
See Figure~\ref{fig:eighttrees} for an illustration of this definition.
Note that $D_{T,\sigma}\in\Div^{g-1}(G)$, where $g:=\left(\sum_{e} \omega(e)\right)-\left(\sum_{v} \omega(v)\right)+1$ is the \emph{weighted genus} of $G$. Further, we observe that $D_{T,\sigma}$ is balanced if and only if for all $v\in V$, we have
\[\left.\left(\sum_{e\text{ to }v} \sigma(e)-\sum_{e\text{ from }v}\sigma(e)\right)\right| \omega(v).\]
If this condition is satisfied, we call $\sigma$ a \emph{balanced edge sub-weighting} for $T$. For any spanning tree, the trivial sub-weighting is always balanced, but there may be nontrivial balanced edge sub-weightings.
\begin{ex}
    The triangle graph has three spanning trees, each defining a $v_2$-connected orientation as in Figure~\ref{fig:spanningtrees}. We assign a pleasant weighting to this graph: let vertex $v_1$ and edges $\{v_1,v_2\}$ and $\{v_1,v_3\}$ have weight $2$, and all other edges and vertices have weight $1$. 
    With this weighting, one of the spanning trees has $4$ sub-weightings, and the other two spanning trees have $2$ sub-weightings each, for a total of $8$ possible edge sub-weightings. These are listed in Figure~\ref{fig:eighttrees}. 
    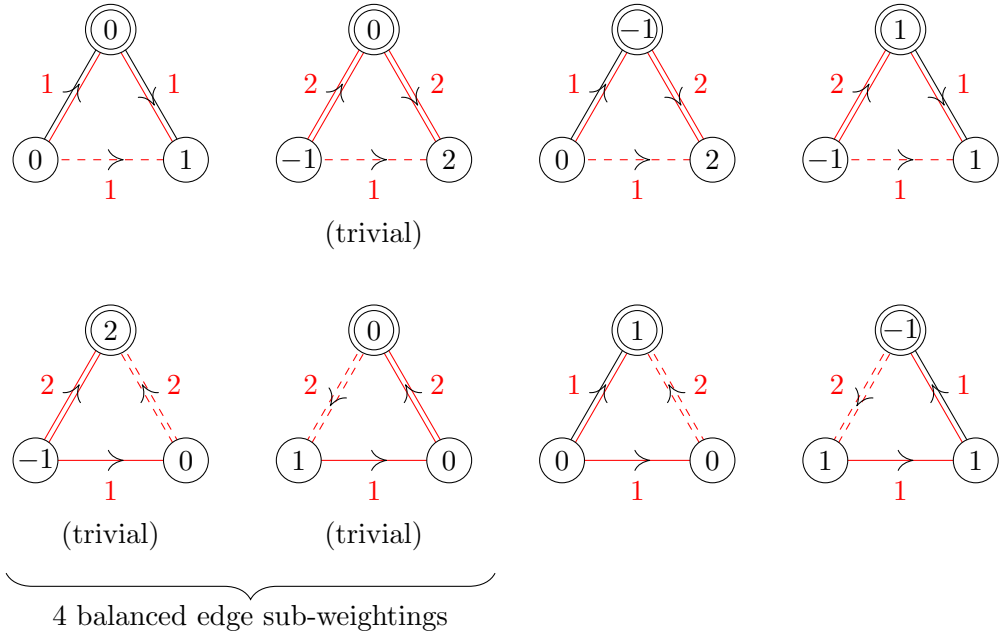
\begin{figure}[h]
        \centering
        \begin{tikzpicture}
            \eighttriangles
        \end{tikzpicture}
        \caption{Edge Sub-weightings for a given pleasant weighting. Edge sub-weightings are labelled with red. Vertices are labeled with the values of the divisor $D_{T,\sigma}$.}
        \label{fig:eighttrees}
    \end{figure}
    For each of the sub-weightings $\sigma$, we label the edges in red with $\sigma(e)$, and the vertices with $D_{T,\sigma}(v)$. Under this labeling, the value $D_{T,\sigma}(v)$ can be computed as ``red lines into $v$ plus black lines out of $v$ minus the weight of $v$.'' Notice that the four divisors on the left are balanced (the coefficient of $v_1$ is even); in particular, we see in the very top left an example of a nontrivial balanced sub-weighting (the other three balanced sub-weightings are all trivial).
    
    By the following theorem, every divisor of degree $1$ on $G$ is chip-firing equivalent to exactly one of these $8$ divisors, and every balanced divisor of degree $1$ is chip-firing equivalent to exactly one of the four on the left.
\end{ex}

\begin{thm}\label{thm:countgraphcosets}
    Let $(G,\omega)$ be a pleasantly weighted graph. Then
    \[\{D_{T,\sigma}:\text{$T$ a spanning tree for $G$, $\sigma$ an edge sub-weighting for $T$}\}\]
    forms a complete set of coset representatives for $\Prin(G)$ in $\Div^{g-1}(G)$, and 
    \[\{D_{T,\sigma}:\text{$T$ a spanning tree for $G$, $\sigma$ a balanced edge sub-weighting for $T$}\}\]
    forms a complete set of coset representatives for $\Prin(G)$ in $\Div_b^{g-1}(G)$.
\end{thm}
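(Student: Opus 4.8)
The plan is to reduce the weighted statement to the classical (unweighted) Bernardi bijection on an auxiliary multigraph, treating the vertex weights as a harmless degree shift and the edge weights via edge-expansion. Let $G^\sharp$ be the multigraph obtained from $G$ by replacing each edge $e$ with $\omega(e)$ parallel unweighted edges, keeping the vertex set $V$. Since firing a vertex $v$ in the weighted Laplacian sends $\omega(e)$ chips across each incident edge $e$, while firing $v$ in $G^\sharp$ sends one chip across each of its $\omega(e)$ copies, the two Laplacians agree; hence $\Prin(G)=\Prin(G^\sharp)$ as subgroups of $\Div^0(G)=\Div^0(G^\sharp)$, and so $\Pic^0(G)=\Pic^0(G^\sharp)$. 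Set $W:=\sum_{v\in V}(\omega(v)-1)[v]$, a fixed divisor. A direct computation from the definition gives
\[ D_{T,\sigma}+W=\sum_{v\in V}\big(\operatorname{indeg}_{O^\sharp}(v)-1\big)[v]=D_{O^\sharp}, \]
where $O^\sharp$ is the orientation of $G^\sharp$ that, for each edge $e:w\to v$ of the $q$-connected orientation of $G$ attached to $T$, orients $\sigma(e)$ copies of $e$ toward $v$ and the other $\omega(e)-\sigma(e)$ toward $w$. Translation by $W$ carries $\Div^{g^\sharp-1}(G^\sharp)$ onto $\Div^{g-1}(G)$ (with $g^\sharp=\sum_e\omega(e)-|V|+1$, and one checks $g^\sharp-1-\deg W=g-1$) and is $\Prin$-equivariant, so it suffices to prove the statement on $G^\sharp$ for the divisors $D_{O^\sharp}$.

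\emph{Part (1).} First I would count. A spanning tree of $G^\sharp$ amounts to a spanning tree $T$ of $G$ together with a choice of one copy of each $e\in E_T$ (two copies of one edge would form a cycle), so the number of pairs $(T,\sigma)$ equals $\sum_T\prod_{e\in E_T}\omega(e)=\tau(G^\sharp)$, which by the matrix--tree theorem is $|\Pic^0(G^\sharp)|=|\Pic^0(G)|$, exactly the number of cosets of $\Prin(G)$ in $\Div^{g-1}(G)$. Hence ``complete set of coset representatives'' is equivalent to injectivity of $(T,\sigma)\mapsto[D_{T,\sigma}]$. Because $\sigma(e)\ge 1$ always, each edge $e:w\to v$ of the $q$-connected orientation of $G$ has at least one copy oriented $w\to v$ in $O^\sharp$; lifting directed paths from $G$ shows $O^\sharp$ is again $q$-connected, so each $D_{O^\sharp}$ is $q$-orientable on $G^\sharp$. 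By the uniqueness in Theorem~\ref{thm:abks}, distinct $q$-orientable divisors are chip-firing inequivalent, so the desired injectivity will follow once I show that $(T,\sigma)\mapsto D_{O^\sharp}$ is injective as a map to divisors and realizes the Bernardi bijection of Proposition~\ref{prop:q-conn} for $G^\sharp$.

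\emph{Main obstacle.} The crux is to identify $O^\sharp$ with the tour orientation of a spanning tree of $G^\sharp$. I would fix a ribbon structure on $G^\sharp$ refining that of $G$, with the $\omega(e)$ copies of each edge appearing consecutively (in a fixed internal order) around each endpoint, and analyze a single bundle of parallel copies during a tour. For a bundle containing no tree edge, the tour orients all copies toward the vertex from which the bundle is first reached, matching $\sigma(e)=\omega(e)$; for a bundle containing exactly one tree edge, I must show that as the designated tree copy ranges over its $\omega(e)$ possibilities, the number of copies the tour orients toward the head ranges bijectively over $\{1,\dots,\omega(e)\}$ and equals $\sigma(e)$. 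Establishing this local count — and thereby the bijection between pairs $(T,\sigma)$ and spanning trees of $G^\sharp$ under which $D_{T,\sigma}+W$ is the associated $q$-orientable divisor — is the main work; Proposition~\ref{prop:q-conn} then yields injectivity and completes Part (1).

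\emph{Part (2).} Since $(G,\omega)$ is pleasant, $\Prin(G)\subseteq\Div_b(G)$, so every coset of $\Prin(G)$ in $\Div^{g-1}(G)$ lies either entirely inside $\Div_b^{g-1}(G)$ or entirely outside it, and a coset is balanced precisely when its representative is. As already computed in the text, $D_{T,\sigma}$ is balanced exactly when $\sigma$ is a balanced edge sub-weighting. Intersecting the transversal from Part (1) with $\Div_b^{g-1}(G)$ therefore leaves precisely the $D_{T,\sigma}$ with $\sigma$ balanced, which thus form a complete set of coset representatives for $\Prin(G)$ in $\Div_b^{g-1}(G)$ (nonempty, since the trivial sub-weighting is always balanced). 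This gives the second statement.
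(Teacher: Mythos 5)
Your overall strategy is the same as the paper's: expand each edge $e$ into $\omega(e)$ parallel unweighted copies (your $G^\sharp$ is the paper's $\widehat{G}$), note that principal divisors are identified, translate by the constant divisor $W=\sum_{v}(\omega(v)-1)[v]$, and reduce to Proposition~\ref{prop:q-conn} and Theorem~\ref{thm:abks}. Your Part (2) is correct and matches the paper, and your cardinality count $\#\{(T,\sigma)\}=\sum_T\prod_{e\in E_T}\omega(e)=\#\{\text{spanning trees of }G^\sharp\}=\#\{\text{cosets}\}$ is a sound reduction of ``complete set of representatives'' to injectivity.

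The gap is the one you flag yourself and do not close: you never prove that the orientations $O^\sharp$ attached to the pairs $(T,\sigma)$ are tour orientations of spanning trees of $G^\sharp$, nor even the weaker fact you correctly identify as sufficient, that $(T,\sigma)\mapsto D_{O^\sharp}$ is injective on divisors. $q$-connectivity of $O^\sharp$ alone does not give this: as the paper's own remark notes, the clockwise and counterclockwise orientations of the triangle are both $q$-connected and define the same divisor, so distinct $O^\sharp$ could a priori collide. Without the bundle-by-bundle analysis you describe, Part (1) is unproved. The paper sidesteps the heaviest form of this verification by running the construction in the opposite direction: start from a spanning tree $\widehat{T}$ of $\widehat{G}$, let $T$ consist of the edges of $G$ having a copy in $\widehat{T}$, and \emph{define} $\sigma(e)$ as the number of copies of $e$ that the tour of $\widehat{T}$ orients in the same direction as the tree copy. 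With that definition the identity $D_{T,\sigma}=D_{\widehat{T}}-W$ requires only the check that the tours of $\widehat{T}$ in $\widehat{G}$ and of $T$ in $G$ orient corresponding edges compatibly (arranged by taking the ribbon structure on $\widehat{G}$ to refine that of $G$ with parallel copies consecutive, and noting that all copies of a non-tree edge are oriented toward the endpoint first reached). Then $\{D_{\widehat{T}}-W\}$ is a complete set of coset representatives by Theorem~\ref{thm:abks}, it is contained in $\{D_{T,\sigma}\}$, and your counting argument forces the containment to be an equality with each coset hit exactly once. Either reorganize your proof in that direction, or actually carry out the local count you postpone; as written, the ``main obstacle'' paragraph is a statement of intent rather than a proof.
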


\begin{rem}
    Theorem~\ref{thm:countgraphcosets} is a direct generalization of the classical matrix-tree theorem that the set of spanning trees of $G$ is in bijection with elements of $\Pic^0(G)$. The classical theorem arises when we take $\omega$ to be a trivial weighting (i.e.\ every vertex and edge has weight $1$).
\end{rem}

\begin{proof}
    Let $\widehat{G}=(V,\widehat{E})$ be the unweighted graph obtained by replacing each edge $e$ of $G$ by a collection of $\omega(e)$ parallel edges $e_1,\ldots, e_{\omega(e)}$. Under the natural identification $\Div(G)\cong\Div(\widehat{G})$, the principal divisors are also identified. 
    
    To each spanning tree $\widehat{T}$ of $\widehat{G}$, we will define a spanning tree $T$ of $G$ together with an edge sub-weighting of $T$. For each edge $e$ of $G$, which has been replaced with the parallel edges $e_1,\ldots, e_{\omega(e)}$ in $\widehat{G}$, note that $\widehat{T}$ can include at most one of these edges; include $e$ in $T$ if and only if some $e_i$ is in $\widehat{T}$. Now consider the $q$-connected orientation of $\widehat{G}$ defined by $\widehat{T}$. If $e\in T$, let $\sigma(e)$ be the number of edges $e_1,\ldots, e_{\omega(e)}$ pointing in the same direction as $e_i$ in the $q$-connected orientation resulting from $\widehat{T}$; this means that there will be $\omega(e)-\sigma(e)$ edges pointing in the opposite direction. 
    
    Now set 
    \[D_{\widehat{T}}:= \sum_{v\in V}(\text{indeg}(v)-1)[v]=\sum_{e:w\to v\in \widehat{E}}[v]-\sum_{v\in V}[v].\]
    By Theorem~\ref{thm:abks}, the divisors $D_{\widehat{T}}$ give a complete set of coset representatives for $\Prin(\widehat{G})$ in $\Div^{\widehat{g}-1}(\widehat{G})$, where $\widehat{g}$ is the unweighted first betti number of $\widehat{G}$. But by construction of $T$ and $\sigma$ from $\widehat{T}$, we can see that
    \[D_{T,\sigma}=D_{\widehat{T}}-\sum_{v\in V}(\omega(v)-1)[v].\]
    Since $\sum_{v\in V}(\omega(v)-1)[v]$ is constant for all trees and sub-weightings, this gives us a complete set of $\Prin(G)$--coset representatives in $\Div^{g-1}(G)$.
    
    Finally, since every element of $\Prin(G)$ is balanced, we can obtain $\Prin(G)$--coset representatives in $\Div_b^{g-1}(G)$ simply by taking those edge sub-weighted spanning trees for which $D_{T,\sigma}$ is a balanced divisor.
\end{proof}

\begin{cor}\label{cor:countgraphcosets}
    The number of elements of $\Div^0(G)/\Prin(G)$ is
    \[\sum_{\substack{T \text{ spanning}\\ \text{tree of }G}}\  \prod_{e\in E_T} \omega(e),\]
    and the number of elements of $\Div_b^0(G)/\Prin(G)$ is
    \[\frac{\gcd_{v\in V}\omega(v)}{\prod_{v\in V}\omega(v)}\left(\sum_{\substack{T \text{ spanning}\\ \text{tree of }G}} \prod_{e\in E_T} \omega(e)\right).\]
\end{cor}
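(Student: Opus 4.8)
The plan is to derive both counts from Theorem~\ref{thm:countgraphcosets} together with the exact diagram of Section~\ref{sec:balanceddivs} relating balanced divisors, all divisors, and unbalancings. The first formula is essentially a direct count, while the second is obtained by dividing the first by the order of a group of unbalancings.

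For the first formula, I would begin by observing that translation by any fixed divisor $D_0\in\Div^{g-1}(G)$ induces a bijection $\Div^{g-1}(G)/\Prin(G)\to\Div^0(G)/\Prin(G)$, so these quotients have equal cardinality. By Theorem~\ref{thm:countgraphcosets}, $|\Div^{g-1}(G)/\Prin(G)|$ equals the number of pairs $(T,\sigma)$ with $T$ a spanning tree and $\sigma$ an edge sub-weighting of $T$. For a fixed $T$, an edge sub-weighting is specified by freely choosing $\sigma(e)\in\{1,\dots,\omega(e)\}$ for each $e\in E_T$ (the values on edges outside $E_T$ being forced to equal $\omega(e)$), giving exactly $\prod_{e\in E_T}\omega(e)$ choices. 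Summing over all spanning trees yields $\sum_T\prod_{e\in E_T}\omega(e)$, the first formula.

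For the second formula I would avoid counting balanced sub-weightings tree-by-tree (which has no clean closed form) and instead exploit the chain $\Prin(G)\subseteq\Div_b^0(G)\subseteq\Div^0(G)$. The first inclusion holds because $G$ is pleasant, so every principal divisor is balanced; the second is immediate. Since all three quotients in sight are finite, the third isomorphism theorem gives a short exact sequence
\[0\to\Div_b^0(G)/\Prin(G)\to\Div^0(G)/\Prin(G)\to\Div^0(G)/\Div_b^0(G)\to 0,\]
where the rightmost term is $\Div_u^0(G)$ by the top row of the displayed diagram. Hence $|\Div_b^0(G)/\Prin(G)|=|\Div^0(G)/\Prin(G)|\big/|\Div_u^0(G)|$, and it remains to compute $|\Div_u^0(G)|$. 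From the diagram, $\Div_u^0(G)$ is the kernel of the degree map $\Div_u(G)\to\Z/\omega(G)\Z$; using $\Div_u(G)\cong\prod_v\Z/\omega(v)\Z$ of order $\prod_v\omega(v)$ and the surjectivity of this degree map, I obtain $|\Div_u^0(G)|=\prod_v\omega(v)/\omega(G)$. Substituting, and recalling $\omega(G)=\gcd_{v}\omega(v)$, produces the stated second formula.

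The main obstacle is pinning down $|\Div_u^0(G)|$, and in particular the surjectivity of the degree map $\Div_u(G)\to\Z/\omega(G)\Z$ — this is exactly the exactness of the right-hand column of the displayed diagram, and can be verified directly by noting that for any single vertex $v_0$ the reduction $\Z/\omega(v_0)\Z\to\Z/\omega(G)\Z$ is surjective because $\omega(G)\mid\omega(v_0)$. Everything else is bookkeeping: the degree-translation bijection, the per-tree count of sub-weightings, and the multiplicativity of orders in the short exact sequence.
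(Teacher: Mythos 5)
Your proposal is correct and follows essentially the same route as the paper: both arguments translate the count from $\Div^{g-1}(G)/\Prin(G)$ to $\Div^0(G)/\Prin(G)$, invoke Theorem~\ref{thm:countgraphcosets} with the independent per-edge choice of sub-weightings, and then divide by $|\Div_u^0(G)|=\frac{1}{\omega(G)}\prod_v\omega(v)$ via the isomorphism $\bigl(\Div^0(G)/\Prin(G)\bigr)\big/\bigl(\Div_b^0(G)/\Prin(G)\bigr)\cong\Div_u^0(G)$. Your explicit verification of the surjectivity of the degree map on unbalancings is a detail the paper leaves implicit, but the substance is identical.
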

\begin{proof}
    Note that $\Div^{g-1}(G)$ is a $\Div^0(G)$--torsor, and so the number of elements of $\Div^0(G)/\Prin(G)$ is equal to the number of $\Prin(G)$--cosets contained in $\Div^{g-1}(G)$. By Theorem~\ref{thm:countgraphcosets}, this is equal to the number of edge sub-weighted spanning trees. Given a spanning tree, we can define an edge sub-weighting on each edge independently, giving us the desired count.
    
    Now observe that 
    \[\frac{\Div^0(G)/\Prin(G)}{\Div_b^0(G)/\Prin(G)}\cong \frac{\Div^0(G)}{\Div_b^0(G)}=\Div_u^0(G),\]
    where $\Div_u^0(G)$ is the kernel of the surjective map $\Div_u(G)\to \Z/\omega(G)\Z$ and hence has $\frac{1}{\omega(G)}\prod_{v\in V}\omega(v)$ elements. This implies the desired conclusion. 
\end{proof}

\begin{rem}\label{rem:disconnected}
    We can generalize these results to the case that $G$ is disconnected. Given a graph $G$, a \emph{maximal spanning forest} of $G$ is a subgraph $F$ with the same vertex set as $G$, such that for all edges of $G$ not in $F$, adding the edge to $F$ will result in a subgraph with a loop. Equivalently, $F$ is the union of spanning trees on each component of $G$. Instead of $q$-connected orientations, we will have $\{q_1,\ldots,q_r\}$-connected orientations, where $\{q_1,\ldots,q_r\}$ contains one vertex from each component of $G$. We can define edge sub-weightings $\sigma$ for a maximal spanning forest $F$ in the same way as for spanning trees, and divisors $D_{F,\sigma}$ can also be defined analogously.
    
    To generalize Theorem \ref{thm:countgraphcosets} and Corollary~\ref{cor:countgraphcosets}, it suffices to replace spanning trees with maximal spanning forests. The groups $\Div^{g-1}(G)$, $\Div^{g-1}_b(G)$, and $\Prin(G)$ each decompose naturally as a direct sum of subgroups on each component of $G$, so choosing a coset representative for $\Prin(G)$ in $\Div^{g-1}(G)$ (resp.\ $\Div^{g-1}_b(G)$) is equivalent to choosing a coset representative for $\Prin(G_i)$ in $\Div^{g-1}(G_i)$ (resp.\ $\Div^{g-1}_b(G_i)$) for each component $G_i$ of $G$. Under this decomposition, each divisor $D_{F,\sigma}$ can be written as a sum of elements $D_{T_i,\sigma_i}$ by restriction to each component $G_i$.
\end{rem}

\section{Intersection Theory and the Specialization Map}\label{sec:specialization}

For the rest of the paper, we assume the following setup. Let $R$ be a discrete valuation ring with field of fractions $K$ and residue field $k$. Let $X$ be a smooth curve over $K$, and $\mathfrak{X}$ a regular semistable model for $X$ over $R$.  By semistable, we mean that the special fiber is reduced with only ordinary double point singularities.\footnote{It is more common to define a semistable curve as one whose special fiber is geometrically reduced.  However, one may blow up any such curve to get one whose special fiber is a normal crossing divisor.  Since blowups do not affect the combinatorics of the special fiber, as per the next lemma, this distinction will be irrelevant for us.} Let $G=(V,E)$ the dual graph of the special fiber $\mathfrak{X}_k$, so that $V=\{C_1,\ldots,C_\gamma\}$ is the set of components of $\mathfrak{X}_k$, and $E$ is the set of nodes.

We will be working with $\Pic^0(G)$; we claim that this group is independent of the choice of semistable regular proper model.  This fact is well-known, but the authors were unable to find a reference, so a proof is given here.

\begin{lem}
    Let $G$ be the dual graph of $\X_k$, the special fiber of the minimal regular proper model, and let $G'$ be the dual graph of $\X'_k$ for $\X'$ any semistable proper regular model.  Then $\Pic^0(G)\cong \Pic^0(G')$.
\end{lem}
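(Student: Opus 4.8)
The plan is to reduce the statement to a single combinatorial operation on graphs — adjoining a pendant vertex — which I will show preserves $\Pic^0$. By the standard minimal-model theory for arithmetic surfaces, the minimal regular model $\X$ is dominated by $\X'$, and the induced morphism $\X'\to\X$ factors as a finite sequence of contractions of $(-1)$-curves (exceptional divisors of the first kind). First I would verify that each contraction occurring in this sequence stays within the class of semistable models, and that on dual graphs it simply deletes a leaf. In particular this shows that $\X$ is itself semistable (so that its dual graph $G$ is defined), and that $G'$ is obtained from $G$ by repeatedly adjoining pendant vertices.

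The geometric heart is the claim that if $E$ is a contractible component of the special fiber of a regular semistable model, then $E\cong\P^1_{k'}$ for a finite extension $k'/k$, and $E$ meets the rest of the special fiber in exactly one node, so that $E$ is a leaf of the dual graph. I would prove this numerically: writing the special fiber as the reduced divisor $F=\sum_i C_i$, verticality gives $F\cdot E=0$, hence $E^2+E\cdot(F-E)=0$. Castelnuovo's criterion over a general residue field gives $E^2=-[k':k]$, while each node $p$ on $E$ contributes $[k(p):k]\geq[k':k]$ to $E\cdot(F-E)$, because $k(p)\supseteq k(E)=k'$; comparing the two expressions forces $E$ to carry exactly one node, defined over $k'$. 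A short local computation ($\pi=uv\cdot(\text{unit})$ at that node) then confirms that contracting $E$ produces a reduced nodal, hence semistable, special fiber, with $E$ and its unique node removed.

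For the combinatorial step, I would show that adjoining a pendant vertex $w$ to a graph $H$ (joined by a single edge to a vertex $v\in V(H)$) yields a graph $H'$ with $\Pic^0(H')\cong\Pic^0(H)$. Firing $w$ gives $\Delta_{H'}(\mathbf 1_w)=[w]-[v]$, so $[w]\equiv[v]$ in $\Pic(H')$ and the inclusion $\Div^0(H)\hookrightarrow\Div^0(H')$ is surjective onto $\Pic^0(H')$. Comparing Laplacians shows this map descends to an isomorphism: for $u\neq v$ one has $\Delta_{H'}(\mathbf 1_u)=\Delta_{H}(\mathbf 1_u)$, while $\Delta_{H}(\mathbf 1_v)=\Delta_{H'}(\mathbf 1_v+\mathbf 1_w)$, and an inspection of the $[w]$-coefficient of a principal divisor (which equals $f(w)-f(v)$) gives injectivity. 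Iterating this isomorphism along the chain of pendant-vertex additions carrying $G$ to $G'$ yields $\Pic^0(G)\cong\Pic^0(G')$.

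The main obstacle is the geometric claim, and specifically making the ``single node, hence a leaf'' deduction survive the residue-field bookkeeping. Since $k$ is not assumed algebraically closed, the components and nodes carry nontrivial residue-field degrees, and one must track the factors $[k(E):k]$ and $[k(p):k]$ carefully to rule out the possibility that $E$ meets the rest of the fiber in more than one point (or that contraction introduces non-reduced structure). Once this is settled, both the verification that the intermediate models remain semistable and the graph-theoretic isomorphism are routine.
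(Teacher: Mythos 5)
Your proposal is correct, and its overall strategy coincides with the paper's: both arguments reduce the lemma to the observation that passing between the two models only adds or deletes leaves of the dual graph, and that a pendant vertex does not change $\Pic^0$. The difference is the direction in which the geometric input is established. The paper starts from the minimal model and classifies blowups at closed points of the special fiber: a blowup at a smooth point adds a leaf, while a blowup at a node produces a non-reduced exceptional component, which is incompatible with reaching a semistable model. You instead start from $\X'$ and classify the $(-1)$-curves that get contracted, using Castelnuovo's criterion together with $E\cdot\X_k=0$ to force any such $E$ to meet the rest of the fiber in a single node with residue field $k'=H^0(E,\O_E)$, hence to be a leaf. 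Your direction buys two things: the induction on semistability of the intermediate models is automatic (each contraction of a leaf visibly preserves semistability, whereas in the blowup direction one should also note that semistability, once lost by blowing up a node, cannot be recovered by further blowups), and your residue-field bookkeeping is more careful --- the paper's assertion $P\cdot C_i=1$ for the exceptional curve over a smooth point is literally correct only for $k$-rational points, though the leaf conclusion is unaffected. The paper's version is correspondingly shorter, taking for granted the combinatorial fact about pendant vertices that you prove via the Laplacian computation.
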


\begin{proof}
It suffices to show that $\Pic^0(G)$ is unchanged upon blowing up a point on the special fiber to obtain another semistable model.  There are two cases to consider, corresponding to whether the point $p\in \X_k$ to be blown up is a smooth or nodal point. If $p$ is a smooth point on some component $C_i$ and $P$ denotes the exceptional divisor that comes from blowing up $p$, then $P\cdot C_i=1$ and $P\cdot C_k=0$ for all $k\neq i$. This means that $P$ adds a leaf to the dual graph, which does not change $\Pic^0(G)$.

On the other hand, suppose $p$ is the intersection of two components $C_i$ and $C_j$. Then we have $P\cdot \X_k=0$, $P\cdot C_i=1$ and $P\cdot C_j=1$, so that $P^2=-2$. But $P$ is a copy of $\P^1$, therefore it must be nonreduced. But this means that the model is no longer semistable.
\end{proof}

Let $\Div(X)$ and $\Div(\mathfrak{X})$ denote the set of Cartier divisors on $X$ and $\mathfrak{X}$ respectively (since $X$ and $\mathfrak{X}$ are both regular, we can equivalently consider Weil divisors). Given $D\in \Div(X)$, we can define $\overline{D}\in\Div(\mathfrak{X})$, the ``Zariski closure'' of $D$, by defining it first on prime divisors in the natural way, and then extending linearly. Since $D$ can be recovered from $\overline{D}$ by restriction to the generic fiber, this allows us to identify $\Div(X)$ with a subgroup of $\Div(\mathfrak{X})$, the group of ``horizontal divisors'' on $\mathfrak{X}$. Every divisor on $\mathfrak X$ can be written uniquely as a sum of a horizontal divisor and a ``vertical divisor,'' that is, a divisor supported on $\mathfrak X_k$.

Define the \emph{specialization map} $\rho:\Div(\mathfrak{X})\to\Div(G)$ by
\[\rho(\mathcal{D}):=\sum_{C_i\in V} (C_i\cdot \mathcal{D})[C_i],\]
where $C_i\cdot \mathcal{D}=\deg(\O_{\mathfrak{X}}(\mathcal{D})|_{C_i})$ is the intersection pairing. The map $\rho$ is evidently a homomorphism, and it preserves degrees because the degree of the line bundle $\O_{\mathfrak{X}}(\mathcal{D})$ on $\mathfrak{X}$ is preserved by base change to the special fiber.  Since the intersection pairing is invariant under linear equivalence, $\Prin(\mathfrak X)$ is contained in $\ker\rho$.

We will prove that $\rho$ maps $\Prin(X)$ surjectively onto $\Prin(G)$ (Lemma~\ref{lem:specialprin}). On the other hand, $\Div(X)$ does not necessarily map surjectively onto $\Div(G)$; we will compute the image of $\Div(X)$ in $\Div(G)$ (Lemma~\ref{lem:specialdiv}). Both results require the following lemma. 

\begin{lem}\label{lem:shafarevich}
    Let $S$ be the spectrum of a field or of a DVR. For any divisor on a regular proper curve over $S$, and any finite set of closed points $\{p_1,\dots,p_m\}$ on the curve, there exists a linearly equivalent divisor with support disjoint from $\{p_1,\dots,p_m\}$.  
\end{lem}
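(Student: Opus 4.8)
The plan is to reduce the statement to the triviality of the divisor class group of the semilocal ring at the points $p_1,\dots,p_m$. Write $C$ for the curve and $Z=\{p_1,\dots,p_m\}$, and let $B=\O_{C,Z}$ be the semilocal ring of functions regular at every $p_i$. Its maximal ideals correspond to the $p_i$, with $B_{\mathfrak{m}_{p_i}}=\O_{C,p_i}$, and since $C$ is integral its generic point lies in $\Spec B$, so $\Frac(B)=K(C)$. The flat morphism $\Spec B\to C$ restricts divisors: a prime divisor $Y$ of $C$ survives in $\Spec B$ exactly when $Y$ passes through some $p_i$, and orders of vanishing are computed in the common local rings. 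Thus it suffices to produce $f\in K(C)^\times$ with $\mathcal{D}|_{\Spec B}=\div_B(f)$; then $\mathcal{D}-\div_C(f)$ is linearly equivalent to $\mathcal{D}$ and restricts to $0$ on $\Spec B$, which says precisely that no prime divisor through any $p_i$ occurs in its support, i.e.\ its support is disjoint from $Z$.

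The existence of such an $f$ is exactly the assertion that $\operatorname{Cl}(B)=0$, and this is where the two relevant properties of $B$ enter. Because $C$ is regular, each $\O_{C,p_i}$ is a regular local ring --- a DVR when $S=\Spec k$, and a two-dimensional regular local ring when $S=\Spec R$ --- so $B$ is a Noetherian regular domain (a Noetherian ring all of whose localizations at maximal ideals are regular is regular). Regular schemes are locally factorial, so Cartier and Weil divisors coincide on $\Spec B$ and $\operatorname{Cl}(B)\cong\Pic(B)$. On the other hand $B$ is semilocal, and over a semilocal ring every finitely generated projective module of constant rank is free; since $\Spec B$ is connected this forces $\Pic(B)=0$. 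Combining the two, $\operatorname{Cl}(B)=0$, so the divisor $\mathcal{D}|_{\Spec B}$ is principal and the desired $f$ exists.

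Carrying this out proves the lemma uniformly in both cases, the only difference being the dimension of the local rings: in the field case $B$ is a semilocal Dedekind domain, hence a PID, and the statement specializes to the classical fact that one can prescribe finitely many orders of vanishing. The main point deserving care is the identification $\operatorname{Cl}(B)\cong\Pic(B)=0$: one must verify that regularity of $C$ descends to $B$ and then invoke both that a regular ring is locally factorial and that finitely generated projectives over a semilocal ring are free. The remaining bookkeeping --- that restriction of divisors along $\Spec B\to C$ behaves as described, and that vanishing of the restriction is equivalent to the support avoiding $Z$ --- is routine. Note that properness of $C$ is not actually needed for this argument; it is the regularity, ensuring that divisors are locally principal, that does the work.
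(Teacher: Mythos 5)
Your argument is correct, and it takes a genuinely different route from the paper's. The paper's proof is essentially a citation: it reduces properness to projectivity via the Stacks Project and then invokes Shafarevich's moving lemma for smooth projective varieties over a field, asserting that the proof carries over to regular projective schemes over a field or a DVR. You replace that citation with a self-contained argument: form the semilocal ring $B=\O_{C,Z}$, note that it is a regular Noetherian semilocal domain, and conclude $\operatorname{Cl}(B)\cong\Pic(B)=0$, so the restriction of the divisor to $\Spec B$ is principal and subtracting $\div(f)$ clears the support off $Z$. Each step checks out: regularity of $B$ follows from regularity of $C$ since localizations of regular local rings are regular; $\operatorname{Cl}(B)\cong\Pic(B)$ because regular rings are locally factorial (Auslander--Buchsbaum); and $\Pic(B)=0$ because rank-one projectives over a connected semilocal ring are free. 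The one place you are too quick is the closing claim that properness is not needed: for $\O_{C,Z}$ to be an honest semilocal ring whose height-one primes are exactly the prime divisors through the $p_i$, you need $\{p_1,\dots,p_m\}$ to lie in a common affine open. In the paper's setting this is supplied precisely by projectivity, which is deduced from properness and regularity via the very Stacks Project tags the paper cites; more generally it follows from $C$ being divisorial (e.g.\ Noetherian, regular, and separated), so the hypothesis can be weakened but not silently dropped. What your approach buys is that nothing has to be ``extended from the smooth case over a field'': the only inputs are factoriality of regular local rings and triviality of the Picard group of a semilocal ring, both of which apply verbatim to the two-dimensional (arithmetic surface) case.
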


\begin{proof}
    A proper curve over a field is projective over the field~\cite[0A26, 0B45]{stacks}, and a regular proper curve over a DVR is projective over the DVR~\cite[0C5P]{stacks}. The desired result is proved in~\cite[Theorem 3.1]{shafar} for smooth projective varieties over a field, but the proof holds for regular projective schemes over a field or a DVR as well.
\end{proof}

\begin{lem}\label{lem:specialprin}
    The specialization map $\rho$ induces a surjection $\Prin(X)\to\Prin(G)$.
\end{lem}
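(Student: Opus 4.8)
The plan is to route everything through the principal divisor $\div_{\mathfrak X}(f)$ on the total space, which lies in $\ker\rho$. Fix $f\in K(X)^{\times}=K(\mathfrak X)^{\times}$. Every codimension-one point of $\mathfrak X$ is either the generic point $\eta_i$ of a vertical component $C_i$, or the generic point of the closure $\overline P$ of a closed point $P$ of the generic fibre $X$. Since $X=\mathfrak X_K$ is open in $\mathfrak X$, the local ring at such a horizontal generic point is $\mathcal O_{X,P}$, so $\operatorname{ord}_{\overline P}(f)=\operatorname{ord}_P(f)$ and the horizontal part of $\div_{\mathfrak X}(f)$ is exactly the closure $\overline{\div_X(f)}$ (under the identification of $\Div(X)$ with horizontal divisors). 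Writing $n_i:=\operatorname{ord}_{C_i}(f)$, we therefore have $\div_{\mathfrak X}(f)=\overline{\div_X(f)}+\sum_i n_i C_i$, and since $\div_{\mathfrak X}(f)\in\Prin(\mathfrak X)\subseteq\ker\rho$, applying $\rho$ gives $\rho(\overline{\div_X(f)})=-\rho(\sum_i n_i C_i)$.

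Next I would compute $\rho$ on vertical divisors. For $i\neq j$ the intersection number $C_i\cdot C_j$ equals the number of nodes shared by $C_i$ and $C_j$, i.e.\ the number of edges between them in $G$; and since $\mathfrak X_k=\div_{\mathfrak X}(\pi)$ is principal for a uniformizer $\pi$ of $R$ (the special fibre being reduced), we get $C_i\cdot\mathfrak X_k=0$, hence $C_i^2=-\sum_{j\neq i}C_i\cdot C_j$. For any $n\colon V\to\Z$ this yields, at each vertex $C_j$,
\[
\rho\Big(\sum_i n_i C_i\Big)(C_j)=\sum_i n_i\,(C_i\cdot C_j)=\sum_{i\neq j}(C_i\cdot C_j)(n_i-n_j)=-\Delta(n)(C_j),
\]
so $\rho(\sum_i n_i C_i)=-\Delta(n)$. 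Combining with the previous paragraph, $\rho(\overline{\div_X(f)})=\Delta(n_f)$ with $n_f(C_i)=\operatorname{ord}_{C_i}(f)$. In particular $\rho(\Prin(X))\subseteq\operatorname{im}\Delta=\Prin(G)$, which is one inclusion.

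For surjectivity it then suffices to realise every tuple of vertical orders: since $\Prin(G)=\operatorname{im}\Delta$, it is enough to produce, for each prescribed $(n_i)\in\Z^V$, a function $f$ with $\operatorname{ord}_{C_i}(f)=n_i$ for all $i$. The valuations $\operatorname{ord}_{C_1},\dots,\operatorname{ord}_{C_\gamma}$ are pairwise inequivalent discrete valuations on $K(X)$ — their valuation rings are the distinct local rings $\mathcal O_{\mathfrak X,\eta_i}$, and a local equation of $C_i$ at a point of $C_i$ off the other components separates $\operatorname{ord}_{C_i}$ from $\operatorname{ord}_{C_j}$ — so the approximation theorem for inequivalent valuations provides such an $f$ (choose $a_i\in K(X)$ with $\operatorname{ord}_{C_i}(a_i)=n_i$ and approximate all the $a_i$ simultaneously closely enough that each $\operatorname{ord}_{C_i}$ is unchanged). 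Then $\rho(\overline{\div_X(f)})=\Delta((n_i))$, so $\rho$ hits every element of $\operatorname{im}\Delta=\Prin(G)$, and the map $f\mapsto n_f$ is in fact surjective onto all of $\Z^V$.

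The step I expect to be the main obstacle is this surjectivity construction, precisely because one cannot expect a function whose vertical divisor is \emph{literally} $C_i$: the only vertical principal divisors on $\mathfrak X$ are the multiples of the whole fibre $\mathfrak X_k$, and with $k$ not algebraically closed there need not even be a section meeting a single component in a smooth point. The resolution is that we place no constraint on the horizontal part of $\div_{\mathfrak X}(f)$; the approximation theorem is a purely field-theoretic statement about $K(X)$ and yields an $f$ with the prescribed vertical orders at the cost of an uncontrolled but harmless horizontal divisor, so no rational-point or degree obstruction arises. Shafarevich's moving lemma (Lemma~\ref{lem:shafarevich}) can be invoked to push that horizontal divisor off the nodes when one wants to read $\rho$ off as honest intersection multiplicities, but it is not needed for the existence of $f$ itself.
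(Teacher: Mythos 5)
Your proof is correct, and while the containment $\rho(\Prin(X))\subseteq\Prin(G)$ is argued essentially as in the paper (decompose $\div_{\mathfrak X}(f)$ into its horizontal part $\overline{\div_X(f)}$ plus a vertical part, use $\Prin(\mathfrak X)\subseteq\ker\rho$, and identify $\rho$ of a vertical divisor with a Laplacian via $C_i\cdot\mathfrak X_k=0$), your surjectivity argument is genuinely different. The paper starts from a vertical divisor $\mathcal V$ with $\rho(\mathcal V)$ the target element of $\Prin(G)$ and invokes the moving lemma (Lemma~\ref{lem:shafarevich}, which rests on projectivity of $\mathfrak X$ over $R$) to replace $\mathcal V$ by a linearly equivalent horizontal divisor $\mathcal V+\div(f)$, whose restriction to $X$ is then principal with the right image. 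You instead prescribe the vertical orders directly: since the $\operatorname{ord}_{C_i}$ are pairwise inequivalent discrete valuations on $K(X)$, Artin--Whaples approximation produces $f$ with $\operatorname{ord}_{C_i}(f)=n_i$ for any $(n_i)\in\Z^V$, giving $\rho(\div_X(f))=\Delta(n)$ and hence all of $\operatorname{im}\Delta=\Prin(G)$. Your route is purely field-theoretic and avoids projectivity and the moving lemma entirely (though the paper needs Lemma~\ref{lem:shafarevich} elsewhere anyway, e.g.\ in Lemma~\ref{lem:specialdiv}), and it yields the clean identity $\rho(\div_X(f))=\Delta(\operatorname{ord}_{C_\bullet}(f))$, which packages both halves of the lemma into one computation; the paper's version is more geometric and reuses machinery already in place. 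Two small imprecisions worth tightening: over non-closed $k$ the intersection number $C_i\cdot C_j$ is $\sum_p[k(p):k]$ over the shared nodes, not the raw number of nodes --- this is exactly what matches the edge weights in the paper's weighted Laplacian, and your displayed formula in terms of intersection numbers is the correct one; and your parenthetical justification of inequivalence via a local equation is loose (a rational function extending a local equation of $C_i$ could also vanish along $C_j$) --- the clean statement is that distinct prime divisors on a normal separated integral scheme have distinct local rings, hence inequivalent valuations.
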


\begin{rem}
    Baker provides a proof for this fact in the case that $k$ is algebraically closed~\cite{matt}. The fact that $\Prin(X)$ maps into $\Prin(G)$ is his Lemma 2.1; we reproduce the argument here, essentially unchanged. The proof he provides for surjectivity (Corollary A.9), however, follows from the surjectivity of $\Div(X)\to\Div(G)$, which will not hold for general $k$. Baker's proof is also a consequence of more powerful results about the Jacobian variety. We provide a more elementary proof that does not depend on $k$ being algebraically closed.
\end{rem}

\begin{proof}
    Suppose we are given $D\in\Prin(X)$, so $D=\text{div}(f)$ for some rational function $f$ on $X$. The open inclusion $X\to\mathfrak{X}$ induces an isomorphism of function fields, so $f$ extends uniquely to a rational function $\widehat{f}$ on $\mathfrak{X}$. Then $\overline{D}-\text{div}(\widehat{f})$ will be a vertical divisor. Now for each irreducible component $C_j$ of $\mathfrak X_k$, $\rho([C_j])$ is the divisor corresponding to a chip-firing move at the vertex $C_j$ (this follows from the fact that $(C_j\cdot \mathfrak X_k)=0$). This shows that $\rho$ maps the set of vertical divisors surjectively onto $\Prin(G)$, so 
    \[\rho(D)=\rho(D-\text{div}(\widehat{f}))\in\Prin(G).\]

    Now suppose we are given an element in $\Prin(G)$, which by the preceding discussion must equal $\rho(\mathcal{V})$ for some vertical divisor $\mathcal{V}$. Let $x_1,\dots,x_m$ denote closed points on the components $C_1,\dots,C_m$ of $\mathfrak X_k$ respectively. Applying Proposition~\ref{lem:shafarevich} to the divisor $\mathcal V$ and the points $x_1,\ldots,x_m$, we can find a divisor $D=\mathcal V+\text{div}(f)$ for some rational function $f$ on $\mathfrak{X}$, such that $D$ does not contain any irreducible component of $\mathfrak{X}_k$ in its support. Then $D$ is a horizontal divisor, and we have $\rho(D)=\rho(\mathcal V)$ because $\rho$ annihilates $\Prin(\mathfrak X)$.
\end{proof}

We now describe how to measure the failure of surjectivity of $\Div(X)\to\Div(G)$. Following~\cite[Section 1.5]{hasstschink}, we define the \emph{index} of a smooth curve $C/k$, $\ind(C)$, to be the greatest common divisor of the degrees of field extensions $k'/k$ such that $C(k')\neq\emptyset$. The degree of any divisor on $C$ will be a multiple of the index. If $C$ is not smooth, we define $\ind(C)$ to be the index of the normalization $\widehat{C}$ of $C$.

\begin{lem}\label{lem:specialdiv}
    Assume $R$ is henselian. The specialization map $\rho$ induces a surjection $\Div(X)\to\Div_b(G)$, where $\Div_b(G)\leq \Div(G)$ consists of the divisors on $G$ such that the coefficient of $[C_i]$ is a multiple of $\ind(C_i)$.
\end{lem}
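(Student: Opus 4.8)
The plan is to prove the two inclusions separately: that $\rho(\Div(X))$ is contained in $\Div_b(G)$, and that it is all of $\Div_b(G)$. The containment is a short argument from the definition of the index. Take $D\in\Div(X)$ with Zariski closure $\overline D\in\Div(\mathfrak X)$. The coefficient of $[C_i]$ in $\rho(\overline D)$ is $C_i\cdot\overline D=\deg_k\big(\O_{\mathfrak X}(\overline D)|_{C_i}\big)$, the degree of a line bundle $L$ on the curve $C_i/k$. Pulling back along the normalization $\nu:\widehat{C_i}\to C_i$ leaves the degree unchanged (as $\nu$ is birational), and $\nu^*L$ on the smooth projective curve $\widehat{C_i}$ is isomorphic to $\O(E)$ for some divisor $E$; hence $\deg_k L=\deg E$ is a $\Z$-combination of residue degrees $[k(Q):k]$ of closed points $Q\in\widehat{C_i}$, and therefore a multiple of $\ind(\widehat{C_i})=\ind(C_i)$. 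Thus $\rho(\overline D)\in\Div_b(G)$.

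For surjectivity, note that $\Div_b(G)$ is freely generated by the divisors $\ind(C_i)[C_i]$, so it suffices to realize each of these in the image. The key construction produces, for a closed point $P$ of $\mathfrak X_k$ that is a smooth point of the special fiber lying only on $C_i$ (i.e.\ not a node) and has residue field $k'$, a horizontal divisor $\Gamma$ with $\rho(\Gamma)=[k':k]\,[C_i]$. To build $\Gamma$, work in the regular local ring $\O_{\mathfrak X,P}$: since $P$ is a smooth point of the reduced special fiber, $\mathfrak m_P$ is generated by the uniformizer $\pi$ of $R$ together with an element $u$ restricting to a uniformizer of $\O_{C_i,P}$. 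The locus $V(u)$ is a horizontal curve through $P$ meeting $C_i=V(\pi)$ transversally, so its generic point is a closed point $x\in X$; because $R$ is henselian, the integral closure of $R$ in $k(x)$ is local, so $\Gamma:=\overline{\{x\}}$ meets $\mathfrak X_k$ in the single point $P$. Transversality gives $C_i\cdot\Gamma=[k':k]$, while $C_j\cdot\Gamma=0$ for $j\neq i$ since $\Gamma$ avoids $C_j$.

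It remains to assemble these lifts so that their intersection numbers with $C_i$ have gcd exactly $\ind(C_i)$. The points of $\widehat{C_i}$ lying over the finitely many nodes of $\mathfrak X_k$ can be discarded without changing the index: by the moving lemma (Lemma~\ref{lem:shafarevich}) applied to $\widehat{C_i}$, every closed point is linearly equivalent to a divisor supported away from these preimages, so the gcd of residue degrees over the remaining points still equals $\ind(C_i)$. Choosing finitely many such points $Q_1,\dots,Q_r$ with $\gcd_j[k(Q_j):k]=\ind(C_i)$, realizing each as a smooth non-nodal point $P_j\in C_i$, lifting to $\Gamma_j$ as above, and taking the integer combination $\sum_j a_j\Gamma_j$ with $\sum_j a_j[k(Q_j):k]=\ind(C_i)$ produces an element of $\Div(X)$ whose image under $\rho$ is $\ind(C_i)[C_i]$. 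This exhibits every generator of $\Div_b(G)$ in the image, so $\rho(\Div(X))=\Div_b(G)$.

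The main obstacle is the surjectivity half, and within it the lifting step. One must produce horizontal divisors meeting the special fiber in a controlled way: this is exactly where henselianness is used, forcing the closure of a closed point of $X$ to meet $\mathfrak X_k$ in a single point, so that $\rho(\Gamma)$ is concentrated on one vertex. Note that phrasing the lift through a local coordinate $u$ rather than through an étale section avoids any separability hypothesis on $k'/k$. The other delicate point is knowing the index may be computed from smooth non-nodal points alone, which is precisely what the moving lemma supplies; the containment direction, by contrast, is immediate.
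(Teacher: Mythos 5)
Your proof is correct and follows essentially the same strategy as the paper's: the containment is immediate from the definition of the index, and surjectivity is obtained by realizing each generator $\ind(C_i)[C_i]$ in the image, using the moving lemma (Lemma~\ref{lem:shafarevich}) to push the chosen points of $\widehat{C_i}$ off the preimages of the nodes and henselianness to lift them to horizontal divisors meeting $\mathfrak{X}_k$ in a single point. The only difference is presentational: where you construct the lift $\Gamma$ by hand from a local parameter $u$ at $P$ (thereby also making explicit the transversality computation $C_i\cdot\Gamma=[k':k]$ and avoiding any separability issue), the paper simply cites the surjectivity of the reduction map on closed points for henselian $R$~\cite[Corollary 10.1.38]{liu}.
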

\begin{rem}
    If we define a weighting on $G$ such that $\omega(C_i):=\ind(C_i)$, then $\Div_b(G)$ is the set of balanced divisors on $G$, as defined in Section~\ref{sec:weightedgraphs}.
\end{rem}
\begin{proof}
    Given $D\in\Div(X)$ and any component $C_i$ of $\mathfrak{X}_k$, observe that $\O_{\mathfrak{X}}(\overline{D})|_{C_i}$ will be a line bundle on $C_i$, and so its degree, $C_i\cdot \overline{D}$, will be a multiple of $\ind(C_i)$. This proves that $\Div(X)$ maps into $\Div_b(G)$.
    
    To show surjectivity, it suffices to show that for any $i$, $\ind(C_i)[C_i]$ is in the image of $\Div(X)$. Let $\widehat{C_i}$ denote the normalization of $C_i$. By definition of index, we can find closed points $\widehat{P_1},\ldots,\widehat{P_m}$ on $\widehat{C_i}$ and integers $a_1,\ldots,a_m$ such that $\sum a_\ell[k(\widehat{P_\ell}):k]=\ind(C_i)$. Applying Proposition~\ref{lem:shafarevich} to choose a linearly equivalent divisor if necessary, we can ensure that none of the $\widehat{P_\ell}$ map to singularities in $\mathfrak{X}_k$. Let $P_\ell$ denote the image of $\widehat{P_\ell}$ in $C_i$. 
    
    Since $R$ is henselian, the reduction map from closed points of $X$ to closed points of $\mathfrak{X}_k$ is surjective~\cite[Corollary 10.1.38]{liu}. That is, for each $P_\ell$, there exists a closed point $Q_\ell$ of $X$ such that $\overline{\{Q_\ell\}}\cap \mathfrak{X}_k=\{P_\ell\}$. Define the divisor $D:=\sum a_j[Q_j]$ on $X$. Since the Zariski closure of each $Q_\ell$ only intersects $\mathfrak{X}_k$ at $P_\ell$, we have $C_j\cdot \overline{\{Q_\ell\}}=0$ for all $j\neq i$, and $C_i\cdot \overline{\{Q_\ell\}}=[k(P_\ell):k]$. This implies $\rho(D)=\ind(C_i)[C_i]$ as desired.
\end{proof}

\section{The Arithmetic Component Group}\label{sec:arithcomponent}

\subsection{An Arithmetic Version of Raynaud's Theorem}

We introduce some definitions. Let
\[\Div^{(0)}(X)=\Div(X)\cap \ker\rho,\qquad \Prin^{(0)}(X)=\Prin(X)\cap\ker\rho,\]
and $\Pic^{(0)}(X)$ be the image of $\Div^{(0)}(X)$ in $\Pic(X)=\Div(X)/\Prin(X)$.

\begin{defn}
The \emph{arithmetic component group of $\Pic^0(X)$} is defined to be $\Phi^a_X:=\Pic^0(X)/\Pic^{(0)}(X)$.
\end{defn}

The reason for the name is that if $k$ is algebraically closed, then $\Phi^a_X$ can be identified with the (geometric) component group $\Phi_X$ of the special fiber of the N\'eron model of the Jacobian of $X$~\cite[Appendix A]{matt}. More generally (under certain mild conditions), we will have an injection $\Phi^a_X\hookrightarrow\Phi_X$ (Proposition~\ref{prop:arithgeocomp}), but the two groups may be different; this reflects the fact that divisors defined over $R$ are too coarse to distinguish individual geometric components of $\mathfrak X_k$. We discuss the relationship between these groups in Section~\ref{sec:arithgeocomp}.

The arithmetic component group can be computed using data about $G$, as described below. First, observe that we can assign weights to the vertices and edges of $G$: the weight of the vertex corresponding to a component $C_i$ will be $\ind(C_i)$, and the weight of an edge corresponding to a node $p$ will be $[k(p):k]$. Since the degree of any node on $C_i$ must be a multiple of the index, this is a pleasant weighting of $G$. This allows us to define the group of balanced divisors $\Div_b(G)$ as in Section~\ref{sec:balanceddivs} (divisors such that the coefficient of $[C_i]$ is a multiple of $\ind(C_i)$), and set $\Pic_b^0(G):=\Div_b^0(G)/\Prin(G)$.

\arithraynaud*

\begin{proof}
    We have $\Div_u(G):=\prod_{i=1}^\gamma \Z/\ind(C_i)\Z$, and there is a natural map $\Div(G)\to\Div_u(G)$ obtained by reducing the coefficient of $[C_i]$ modulo $\ind(C_i)$ for every $i$. We can also define a map $\alpha:\Div_u(G)\to \Z/\ind(\mathfrak{X}_k)\Z$ by taking the entries at each component and summing their reductions mod $\ind(\mathfrak{X}_k):=\gcd\{\ind(C_i)\}$, and let $\Div^0_u(G)$ be the kernel of this map. This setup gives us the following commutative diagram:
    \[\begin{tikzcd}
                & 0 \arrow[d]  & 0 \arrow[d]   & 0 \arrow[d]   & 0 \arrow[d]             \\
                0\arrow[r] &\Div^{(0)}(X) \arrow[r] \arrow[d] & \Div^0(X) \arrow[d] \arrow[r, "\rho"]       &
                \Div^0(G)\arrow[d]  \arrow[r]       &\Div^0_u(G) \arrow[r] \arrow[d]           & 0\\
                0\arrow[r] &\Div^{(0)}(X) \arrow[r] \arrow[d] & \Div(X) \arrow[d, "\deg"] \arrow[r, "\rho"]       &
                \Div(G)\arrow[d, "\deg"]  \arrow[r]       &\Div_u(G) \arrow[r] \arrow[d, "\alpha"]           & 0\\
                & 0\arrow[r] &\ind(\mathfrak{X}_k)\Z \arrow[r] \arrow[d] & \Z \arrow[d] \arrow[r]       &
                \Z/\ind(\mathfrak{X}_k)\Z\arrow[d]  \arrow[r]           & 0\\
                & & 0 & 0 & 0.
    \end{tikzcd}\]
    The middle row is exact by Lemma~\ref{lem:specialdiv}, and the columns are obtained by taking kernels and images of specified maps, and are hence also exact. The top-right square commutes because $\Div^0(G)$ lands in the kernel of $\alpha$ under the map $\Div(G)\to\Div_u(G)$. The bottom-right square commutes because the composition in either direction is obtained by summing the coefficients of a divisor modulo $\ind(\mathfrak{X}_k)$. Since the second and third row are exact, so is the first.
    
    We now build another commutative diagram\footnote{In the case $k=\overline{k}$, this diagram reduces to~\cite[(A.6)]{matt}.} by the inclusion of principal divisors:
    \[\begin{tikzcd}
                & 0 \arrow[d]  & 0 \arrow[d]   & 0 \arrow[d]   &            \\
                0\arrow[r] &\Prin^{(0)}(X) \arrow[r] \arrow[d] & \Prin(X) \arrow[d] \arrow[r, "\rho"]       &
                \Prin(G)\arrow[d]  \arrow[r]       & 0\arrow[d]\\
                0\arrow[r] &\Div^{(0)}(X) \arrow[r] \arrow[d] & \Div^0(X) \arrow[d] \arrow[r, "\rho"]       &
                \Div^0(G)\arrow[d]  \arrow[r]       &\Div^0_u(G) \arrow[r] \arrow[d]           & 0\\
                 0\arrow[r] &\Pic^{(0)}(X) \arrow[r] \arrow[d] & \Pic^0(X) \arrow[d] \arrow[r]       &
                \Pic^0(G) \arrow[d]  \arrow[r]           & \Div_u^0(G) \arrow[d]  \arrow[r]           & 0\\
                & 0 & 0 & 0 & 0.
    \end{tikzcd}\]
    The columns are exact by definition of $\Pic^{(0)}(X)$, $\Pic^0(X)$, and $\Pic^0(G)$, and the first row is exact by Lemma~\ref{lem:specialprin}. Since we proved that the middle row is exact, we can conclude that the bottom row is also exact. We obtain the desired result by observing that
    \[\ker(\Pic^0(G)\to \Div_u^0(G))=\ker(\Div^0(G)\to \Div_u^0(G))/\Prin(G)=\Div_b^0(G)/\Prin(G).\qedhere\]
\end{proof}

We will now combine our arithmetic variants of Raynaud's Theorem~\ref{thm:arithraynaud} and the matrix-tree theorem~\ref{thm:countgraphcosets}. We assume that $\mathfrak X_k$ is connected so that the dual graph $G$ is connected. Choose a ``root'' component $C_1$ of $\mathfrak{X}_k$. To each balanced sub-weighted spanning tree $(T,\sigma)$ of $G$, we can define a divisor $D_{T,\sigma}$; since it is balanced, there will exist $\widetilde{D_{T,\sigma}}\in\Div(X)$ with $\rho(\widetilde{D_{T,\sigma}})=D_{T,\sigma}$ (Lemma~\ref{lem:specialdiv}). We can now identify the set of spanning trees with balanced edge sub-weightings as a torsor over the arithmetic component group, as follows.

\begin{cor}\label{cor:spantreedecomp}
    Assume $R$ is henselian, and that $\mathfrak X_k$ is connected. The set 
    \[\{\widetilde{D_{T,\sigma}}:\text{$T$ a spanning tree for $G$, $\sigma$ a balanced edge sub-weighting for $T$}\}\]
    forms a complete set of representatives for the cosets of $\Pic^{(0)}(X)$ that are contained in $\Pic^{g-1}(X)$:
    \[\Pic^{g-1}(X)=\bigsqcup_{(T,\sigma)} \widetilde{D_{T,\sigma}}+\Pic^{(0)}(X).\]
    Furthermore, the set of pairs $(T,\sigma)$, where $T$ is a spanning tree of $G$ and $\sigma$ is a balanced edge sub-weighting for $T$, is a torsor over $\Phi^a_X$.
\end{cor}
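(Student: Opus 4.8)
The plan is to lift the matrix-tree decomposition of $\Pic_b^{g-1}(G)$ supplied by Theorem~\ref{thm:countgraphcosets} up to $\Pic^{g-1}(X)$ through the specialization map, using Theorem~\ref{thm:arithraynaud} to control the fibers. Recall that $\rho$ preserves degrees and, by Lemma~\ref{lem:specialprin}, carries $\Prin(X)$ into $\Prin(G)$; combined with the surjectivity in Lemma~\ref{lem:specialdiv}, this shows that $\rho$ induces a surjective map $\bar\rho_{g-1}\colon\Pic^{g-1}(X)\to\Pic_b^{g-1}(G)$, where $\Pic^{g-1}(X)=\Div^{g-1}(X)/\Prin(X)$ and $\Pic_b^{g-1}(G)=\Div_b^{g-1}(G)/\Prin(G)$ (given $[E]$ with $E\in\Div_b^{g-1}(G)$, Lemma~\ref{lem:specialdiv} yields $D\in\Div(X)$ with $\rho(D)=E$, and $\deg D=\deg E=g-1$). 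In particular $\Pic^{g-1}(X)$ is nonempty: each balanced $D_{T,\sigma}$ lies in $\Div_b^{g-1}(G)$, so Lemma~\ref{lem:specialdiv} furnishes $\widetilde{D_{T,\sigma}}\in\Div^{g-1}(X)$ with $\rho(\widetilde{D_{T,\sigma}})=D_{T,\sigma}$.

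Next I would identify the fibers of $\bar\rho_{g-1}$. The set $\Pic^{g-1}(X)$ is a torsor under $\Pic^0(X)$ and $\Pic_b^{g-1}(G)$ a torsor under $\Pic_b^0(G)$, and since $\rho$ is additive, $\bar\rho_{g-1}$ is equivariant over the homomorphism $\bar\rho\colon\Pic^0(X)\to\Pic_b^0(G)$ induced by $\rho$ on degree-$0$ classes. By the proof of Theorem~\ref{thm:arithraynaud}, $\bar\rho$ is surjective with kernel $\Pic^{(0)}(X)$. Hence, writing any two classes of $\Pic^{g-1}(X)$ as $[D]$ and $[D]+[A]$ with $[A]\in\Pic^0(X)$, we have $\bar\rho_{g-1}([D]+[A])=\bar\rho_{g-1}([D])$ if and only if $[A]\in\ker\bar\rho=\Pic^{(0)}(X)$. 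Thus the fibers of $\bar\rho_{g-1}$ are exactly the cosets of $\Pic^{(0)}(X)$ in $\Pic^{g-1}(X)$, and $\bar\rho_{g-1}$ descends to a bijection between the set of these cosets and $\Pic_b^{g-1}(G)$.

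To finish, I would feed in Theorem~\ref{thm:countgraphcosets}, which says that $(T,\sigma)\mapsto [D_{T,\sigma}]$ is a bijection from balanced edge-sub-weighted spanning trees onto $\Pic_b^{g-1}(G)$. Since $\bar\rho_{g-1}$ sends the coset $\widetilde{D_{T,\sigma}}+\Pic^{(0)}(X)$ to $[D_{T,\sigma}]$, composing with the inverse of $\bar\rho_{g-1}$ shows that $(T,\sigma)\mapsto \widetilde{D_{T,\sigma}}+\Pic^{(0)}(X)$ is a bijection onto the set of cosets of $\Pic^{(0)}(X)$ in $\Pic^{g-1}(X)$; this is precisely the asserted decomposition $\Pic^{g-1}(X)=\bigsqcup_{(T,\sigma)}\widetilde{D_{T,\sigma}}+\Pic^{(0)}(X)$. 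For the torsor claim, note that because $\Pic^{g-1}(X)$ is a nonempty $\Pic^0(X)$-torsor and $\Pic^{(0)}(X)\le\Pic^0(X)$, the set of cosets $\Pic^{g-1}(X)/\Pic^{(0)}(X)$ is a torsor under $\Pic^0(X)/\Pic^{(0)}(X)=\Phi^a_X$; transporting this action along the bijection above makes the set of pairs $(T,\sigma)$ a $\Phi^a_X$-torsor.

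The genuine mathematical content is already contained in Theorems~\ref{thm:arithraynaud} and~\ref{thm:countgraphcosets}, so the only real work—and the step most prone to error—is the bookkeeping that promotes the degree-$0$ statements to degree $g-1$: verifying that $\rho$ descends to a well-defined surjection on degree-$(g-1)$ Picard groups, that it is equivariant over $\bar\rho$, and that $\Pic^{g-1}(X)$ is nonempty so that the torsor language is legitimate. Once these compatibilities are in place, the identification of fibers with $\Pic^{(0)}(X)$-cosets and the transported torsor structure are formal.
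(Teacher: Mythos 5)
Your proof is correct and follows essentially the same route as the paper: the paper's own (much terser) argument likewise combines Theorem~\ref{thm:arithraynaud} with Theorem~\ref{thm:countgraphcosets} and transports the statement from degree $0$ to degree $g-1$ via the torsor structures of $\Pic^{g-1}(X)$ over $\Pic^0(X)$ and $\Div_b^{g-1}(G)$ over $\Div_b^0(G)$. You have simply written out the equivariance, surjectivity, and fiber-identification bookkeeping that the paper leaves implicit.
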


\begin{proof}
    As a consequence of Proposition~\ref{thm:arithraynaud}, the specialization map $\rho:\Div(X)\to \Div_b(G)$ induces an isomorphism
    \[\Pic^0(X)/\Pic^{(0)}(X)\cong \Div_b^0(G)/\Prin(G).\]
    Since $\Div_b^{g-1}(G)$ is a torsor over $\Div_b^0(G)$ and $\Pic^{g-1}(X)$ is a torsor over $\Pic^0(X)$, the result follows by Theorem~\ref{thm:countgraphcosets}.
\end{proof}

\begin{rem}
    Corollary~\ref{cor:spantreedecomp} can be made explicit. That is, given a a spanning tree $T$ with a balanced edge sub-weighting $\sigma$, and a divisor $D\in \Pic^0(X)$, we can compute how $D$ acts on $(T,\sigma)$ as follows. After computing the divisor $R:=\rho(D)+D_{T,\sigma}\in \Div^{g-1}(G)$, it suffices to find the spanning tree $T'$ and edge sub-weighting $\sigma'$ such that $R\in D_{T',\sigma'}+\Prin(G)$; then the torsor action will be given by $D+(T,\sigma)=(T',\sigma')$.
    
    As in the proof of Theorem~\ref{thm:countgraphcosets}, we can replace $G$ with the unweighted graph $\widehat{G}$, which has first betti number $\widehat{g}$. Then $R+\sum_{v\in V}(\omega(v)-1)[v]$ is an element of $\Div^{\widehat{g}-1}(\widehat{G})$. We can apply chip-firing moves to this divisor until a $C_1$-orientable divisor is obtained; a simple exponential-time algorithm to do this is described in Remark 4.14 of~\cite{ABKS}, and a more intricate but polynomial-time algorithm is outlined in~\cite{spencer}. The resulting divisor will be of the form $D_{\widehat{T'}}$ for some spanning tree $\widehat{T'}$ of $\widehat{G}$, which we can then use to construct a spanning tree $T'$ of $G$ and an edge sub-weighting $\sigma'$. Then 
    \begin{align*}
        D_{T',\sigma'}&=D_{\widehat{T'}}-\sum_{v\in V}(\omega(v)-1)[v]\\
        &\sim \left(R+\sum_{v\in V}(\omega(v)-1)[v]\right)-\sum_{v\in V}(\omega(v)-1)[v]\\
        &=\rho(D)+D_{T,\sigma},
    \end{align*}
    so this agrees with the torsor action as described in Corollary~\ref{cor:spantreedecomp}.
\end{rem}

\subsection{Comparing the Arithmetic and Geometric Component Groups}\label{sec:arithgeocomp}

So far, we have established bijections between $\Phi_X^a$, $\Pic_b^0(G)$, and the set of spanning trees of $G$ with balanced edge sub-weightings. It is worth considering how each of these is affected by base change. Let $R'/R$ be an extension of DVRs; for now, we will only consider the case that $\mathfrak X'=\mathfrak X_{R'}$ is a regular semistable model of its smooth generic fiber $X'$. Let $k'=R'\otimes_R k$, and let $G'$ be the dual graph of the special fiber $\mathfrak X_{k'}$. 

There are three ways that $G'$ may differ from $G$ as a weighted graph. First, a weighted edge $e$ of $G$ may split into multiple parallel edges in $G'$, with the total weight preserved (this occurs if a node over $k$ splits over $k'$). This operation preserves both $\Prin(G)$ and $\Div_b^0(G)$, and as the proof of Theorem~\ref{thm:countgraphcosets} indicates, this does not change the size of the set of spanning trees with balanced edge sub-weightings. Hence $\Phi_{X'}^a\cong \Phi_X^a$.

Second, the index of a component may shrink, so that $G'$ is isomorphic to $G$ as a graph, but has a lower weight at some vertex and hence more balanced divisors (this can occur because there are more line bundles defined over $k'$ than over $k$). Third, a vertex of $G$ may split into multiple vertices in $G'$ (this occurs if a component of $\mathfrak X_k$ splits into multiple components over $k'$). We will show that in both of these cases, $\Phi_{X}^a$ injects into $\Phi_{X'}^a$.

Note that we have an injection $\Psi:\Div(\mathfrak X)\to \Div(\mathfrak X')$, which can be defined on prime divisors by base change to $R'$ and extended by linearity. By restricting to the set of horizontal divisors and to the degree $0$ part, we get a map $\Div^0(X)\to \Div^0(X')$. 

\begin{prop}
    The map $\Psi$ induces an injection $\Phi_X^a\to \Phi_{X'}^a$.
\end{prop}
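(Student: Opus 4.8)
The plan is to factor $\bar\Psi$ through the specialization maps $\rho,\rho'$ together with the operation of collapsing the components into which a given $C_i$ splits, and then to extract injectivity from the Galois invariance of base-changed divisors. Write $\pi\colon\mathfrak X'\to\mathfrak X$ for the projection, so that $\mathcal{O}_{\mathfrak X'}(\Psi\mathcal D)=\pi^*\mathcal{O}_{\mathfrak X}(\mathcal D)$. First I would check that $\bar\Psi$ is well-defined, i.e.\ $\Psi(\Pic^{(0)}(X))\subseteq\Pic^{(0)}(X')$. If $D\in\Div^{(0)}(X)$ then $\mathcal{O}_{\mathfrak X}(\overline D)$ has degree $0$ on every component $C_i$ of $\mathfrak X_k$. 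Each component $C_{ij}'$ of $\mathfrak X'_{k'}$ over $C_i$ is a component of the base change $(C_i)_{k'}$, and the restriction of $\pi^*\mathcal{O}_{\mathfrak X}(\overline D)$ to $C_{ij}'$ is the pullback of $\mathcal{O}_{\mathfrak X}(\overline D)|_{C_i}$ along $C_{ij}'\to C_i$. Since the degree of a line bundle is preserved under extension of the base field and $C_i$ is irreducible over $k$, a degree-$0$ bundle on $C_i$ restricts to a degree-$0$ bundle on each $C_{ij}'$; hence $\rho'(\Psi\overline D)=0$, so $\Psi(D)\in\Div^{(0)}(X')$.

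Next I would introduce the collapse map $\psi\colon\Div(G')\to\Div(G)$ sending the vertex $C_{ij}'$ to $C_i$. Because base change preserves the total degree of a line bundle on the fiber over each $C_i$, one has $\sum_j(C_{ij}'\cdot\Psi\mathcal D)=C_i\cdot\mathcal D$ for all $\mathcal D\in\Div(\mathfrak X)$, which yields the identity $\psi\circ\rho'\circ\Psi=\rho$. By Theorem~\ref{thm:arithraynaud} the map $\rho$ identifies $\Phi_X^a$ with $\Pic_b^0(G)$, so a class $[D]\in\Phi_X^a$ (with $D\in\Div^0(X)$) is trivial exactly when $\rho(\overline D)\in\Prin(G)$, and likewise for $X'$. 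Thus injectivity of $\bar\Psi$ reduces to the implication
\[\rho'(\Psi\overline D)\in\Prin(G')\ \Longrightarrow\ \rho(\overline D)=\psi\big(\rho'(\Psi\overline D)\big)\in\Prin(G).\]

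The heart of the argument is therefore to show that $\psi$ carries these particular principal divisors of $G'$ into $\Prin(G)$; this is \emph{false} for an arbitrary element of $\Prin(G')$, so the crucial input is that $p':=\rho'(\Psi\overline D)$ is Galois-stable. Indeed $\Psi\overline D$ is pulled back from $\mathfrak X$, hence invariant under $\bar\Gamma:=\Gal(k'/k)$, and $\rho'$ is $\bar\Gamma$-equivariant. After reducing to the case $k'/k$ Galois (by passing to a Galois extension that keeps the model semistable and using that the first factor of an injective composite is injective), and assuming as elsewhere that $\mathfrak X_k$ is geometrically connected (so $G'$ is connected and $\bar\Gamma$ acts transitively on the components over each $C_i$), I would argue as follows. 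Writing $p'=\Delta'(g')$ for the weighted Laplacian, the $\bar\Gamma$-invariance of $p'$ forces $g'-\tau g'\in\ker\Delta'$, i.e.\ a global constant $c_\tau\in\Z$ for each $\tau$; the assignment $\tau\mapsto c_\tau$ is a homomorphism $\bar\Gamma\to(\Z,+)$ and hence vanishes, so $g'$ is $\bar\Gamma$-invariant. Then $g'$ is constant on the fibers of $V(G')\to V(G)$ and descends to a function $h$ on $G$, and a direct computation (using that the total edge weight from a fixed $C_{ij}'$ to the fiber over $C_{i'}$ is $(C_i\cdot C_{i'})/m_i$, where $m_i$ is the number of components over $C_i$) gives $\psi\big(\Delta'(\text{lift of }h)\big)=\Delta(h)$. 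Therefore $\psi(p')=\Delta(h)\in\Prin(G)$, which is what we needed.

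I expect the main obstacle to be exactly this last step: the naive collapse $\psi$ does not respect principal divisors, so one genuinely needs the Galois symmetry of base-changed divisors, together with connectedness of the special fiber, to recover principality—the vanishing of the homomorphism $\tau\mapsto c_\tau$ is the lynchpin, and it degenerates if $G'$ is disconnected. The subsidiary points (that base change preserves degrees component-by-component, and the compatibility $\psi\circ\rho'\circ\Psi=\rho$) are routine once one uses the projection formula with its correct normalization across the residue-field extension $k'/k$.
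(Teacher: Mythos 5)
Your argument is correct in substance but takes a genuinely different route from the paper's. The paper builds the comparison map in the opposite direction: it defines a \emph{splitting} map $\Div_b^0(G)\to\Div_b^0(G')$ sending $\ind(C_i)[C_i]$ to $\sum_j\tfrac{\ind(C_i)}{r_i}[C_i^{(j)}]$, and then handles both well-definedness and injectivity modulo principal divisors on the scheme side, using the fact (from the proof of Lemma~\ref{lem:specialprin}) that $\Prin(G)$ is exactly the image of the vertical divisors under $\rho$ and that verticality is preserved and detected by base change. You instead collapse $G'$ onto $G$ and prove a purely combinatorial descent statement: a Galois-stable principal divisor on $G'$ pushes forward to a principal divisor on $G$, because its potential function $g'$ can itself be taken Galois-invariant (the cocycle $\tau\mapsto c_\tau$ lands in $\Hom(\Gal(k'/k),\Z)=0$). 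This isolates a clean graph-theoretic lemma and makes the role of the Galois symmetry explicit where the paper leaves it implicit; the cost is the extra hypothesis that $G'$ be connected, so that $\ker\Delta'$ consists of constants (both arguments already lean on transitivity of the Galois action on the components over each $C_i$). That extra hypothesis is not actually needed: in the disconnected case $\ker\Delta'$ is the induced module $\Z[\Gal(k'/k)/H]$ for $H$ the stabilizer of a component of $G'$, so $H^1$ still vanishes by Shapiro's lemma and $g'$ can still be corrected to a Galois-invariant potential --- the degeneration you flag at the end does not occur. The remaining verifications (the compatibility $\psi\circ\rho'\circ\Psi=\rho$, equidistribution of degrees and edge weights over each $C_i$, and $\psi(\Delta'(\tilde h))=\Delta(h)$ for the descended function $h$) are correct as you state them, and your reduction of injectivity to the displayed implication via Theorem~\ref{thm:arithraynaud} matches the paper's use of that theorem.
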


\begin{proof}
    By Theorem~\ref{thm:arithraynaud}, the specialization map $\rho:\Div^0(\mathfrak X)\to\Div_b^0(G)$ induces an isomorphism
    \[\Phi^a_X=\Div^0(X)/(\Prin(X)+\Div^{(0)}(X))\xrightarrow{\cong} \Div_b^0(G)/\Prin(G),\]
    and we have an analogous result for the specialization map $\rho':\Div^0(\mathfrak X')\to\Div_b^0(G')$.
    
    Now suppose an irreducible component $C_i$ of $\mathfrak X_k$ splits into irreducible components $C_i^{(1)},\ldots,C_i^{(r_i)}$ over $k'$. These $k'$-components are permuted transitively by $\text{Gal}(k'/k)$, and since $\Psi(\mathcal{D})$ is preserved by the Galois action, the value of $\Psi(\mathcal{D})\cdot C_i^{(j)}$ must be invariant for all $j=1,\ldots r$. This proves that $\ind(C_i)$ is a multiple of $r_i$, and that a divisor on $\mathfrak X$ with degree $r_id$ on $C_i$ will have degree $d$ on each $C_i^{(j)}$. In particular, if $\mathcal{D}\in\ker\rho$, then $(\rho'\circ \Psi)(\mathcal{D})=0$, which implies that $\Psi$ descends to a map $\psi$:
    \[\begin{tikzcd}
    \Div^0(X)\arrow[hookrightarrow]{r}\arrow[d, "\Psi"] & \Div^0(\mathfrak X)\arrow[r, "\rho"]\arrow[d, "\Psi"] & \Div^0_b(G)\arrow[d, "\psi"] \\
    \Div^0(X')\arrow[hookrightarrow]{r} & \Div^0(\mathfrak X')\arrow[r, "\rho'"]& \Div^0_b(G')\\
    \end{tikzcd}
    \]
    where $\psi$ is defined by
    \[\psi:\ind(C_i)[C_i]\mapsto \sum_{j=1}^{r_i} \frac{\ind(C_i)}{r_i}[C_i^{(j)}].\]
    It now suffices to prove that $\psi:\Div^0_b(G)\to \Div^0_b(G')$ descends to an injection $\Div^0_b(G)/\Prin(G)\to\Div^0_b(G')/\Prin(G')$. 
    
    To show this map is well-defined, consider an element of $\Prin(G)$.
    It was observed in the proof of Lemma~\ref{lem:specialprin} that the set of vertical divisors on $\mathfrak X$ map surjectively onto $\Prin(G)$, so we can write this element as $\rho(\mathcal V)$ for some vertical divisor $\mathcal V\in\Div(\mathfrak X)$. Then $\psi(\rho(\mathcal V))=\rho'(\Psi(\mathcal V))$, which is in $\Prin(G')$ because the base change of a vertical divisor is vertical.
    
    To show the map is injective, suppose $D\in\Div_b^0(G)$ satisfies $\psi(D)\in\Prin(G')$. Choosing $\mathcal{D}\in\Div(\mathfrak X)$ mapping to $D$ via $\rho$, we have $\rho'(\Psi(\mathcal{D}))=\psi(\rho(\mathcal{D}))\in\Prin(G')$, which implies that $\Psi(\mathcal{D})$ is a vertical divisor. Since a divisor is vertical if and only if its base change is, we can conclude that $\mathcal{D}$ is a vertical divisor, and hence that $D=\rho(\mathcal{D})$ is in $\Prin(G)$.
\end{proof}

Thus we have established that $\Phi_X^a$ embeds naturally into $\Phi_{X'}^a$, assuming that there is a semistable model of $X'$ which is the base change of a semistable model of $X$. We now show that $\Phi_X^a$ embeds naturally into the component group $\Phi_X$ of the special fiber of the N\'eron model of the Jacobian of $X$, and provide a condition under which the two are isomorphic.

\begin{prop}\label{prop:arithgeocomp}
    Assume $R$ is henselian and $X$ is geometrically irreducible. Suppose also that either $k$ is perfect or that $\mathfrak X$ admits an \'etale quasi-section. Then there is an injective homomorphism $\Phi_X^a\to \Phi_X$. If every irreducible component of $\mathfrak{X}_k$ has index $1$, then this injection is an isomorphism.
\end{prop}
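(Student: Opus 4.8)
The plan is to reduce to the strictly henselian case, where Raynaud's theorem in the form of \cite[Appendix A]{matt} is available, and then to feed the result into the base-change injectivity just proved. Let $R^{sh}$ denote a strict henselization of $R$, with residue field $k^{sep}$, and set $\X':=\X_{R^{sh}}$, $X':=X_{K^{sh}}$. Since $R\to R^{sh}$ is ind-\'etale, it preserves regularity and leaves the shape of the special fiber unchanged, so $\X'$ is again a regular semistable model of the smooth curve $X'$. The previous proposition therefore applies and shows that $\Psi$ induces an injection $\Phi_X^a\hookrightarrow\Phi_{X'}^a$.

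I would next identify $\Phi_{X'}^a$ with $\Phi_X$. Because the formation of N\'eron models commutes with the unramified base change $R\to R^{sh}$, the N\'eron model of $J_{K^{sh}}$ is $\mathcal J_{R^{sh}}$, whose geometric special fiber is $(\mathcal J_k)_{\bar k}$; as the component-group scheme $\pi_0(\mathcal J_k)$ is finite \'etale over $k$, its geometric points are already defined over $k^{sep}$, so $\Phi_{X'}=\pi_0(\mathcal J_{\bar k})=\Phi_X$. It then remains to match the arithmetic component group over $k^{sep}$ with this geometric one, that is, to prove $\Phi_{X'}^a\cong\Phi_{X'}$. When $k$ is perfect we have $k^{sep}=\bar k$ and this is precisely \cite[Appendix A]{matt}; composing the three identifications yields the injection $\Phi_X^a\to\Phi_X$. (Concretely this map is the reduction $\Pic^0(X)\subseteq J(K)=\mathcal J(R)\to\mathcal J_k(k)\to\Phi_X$, and $\Pic^{(0)}(X)$ is killed because classes with trivial specialization reduce into the identity component.)

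For the final assertion I would argue combinatorially through the identifications $\Phi_X^a\cong\Pic_b^0(G)$ (Theorem~\ref{thm:arithraynaud}) and $\Phi_X\cong\Pic^0(G_{\bar k})$, under which the injection becomes the homomorphism $\psi$ of the previous proposition. If every component $C_i$ satisfies $\ind(C_i)=1$, then by the divisibility $r_i\mid\ind(C_i)$ established there no vertex of $G$ splits on passing to $\bar k$, so $G_{\bar k}$ differs from $G$ only by replacing each weighted edge with parallel unit edges---an operation preserving $\Div^0$ and $\Prin$. Since all vertex weights equal $1$ we also have $\Div_b^0(G)=\Div^0(G)$, and hence $\psi$ is an isomorphism, giving $\Phi_X^a\cong\Phi_X$.

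The main obstacle is the identification $\Phi_{X'}^a\cong\Phi_{X'}$ over the separably closed but possibly non-algebraically-closed field $k^{sep}$. There is a genuine tension: preserving $\Phi_X$ forces the base change to be unramified, which can only reach $k^{sep}$, whereas \cite[Appendix A]{matt} is phrased over an algebraically closed residue field, and for imperfect $k$ the gap $\bar k/k^{sep}$ is purely inseparable and cannot be filled by any unramified extension of DVRs. Over $k^{sep}$ a component or node may still carry a nontrivial $p$-power weight, so the weighted dual graph need not yet compute $\Phi_X$. This is exactly the point at which the hypothesis that $\X$ admits an \'etale quasi-section must be invoked: it should be used to force the relevant normalizations to acquire $k^{sep}$-points and to rule out inseparable splitting of nodes, so that passing from $k^{sep}$ to $\bar k$ neither subdivides the graph further nor leaves residual indices, and the dual graph over $k^{sep}$ already realizes $\Phi_X$. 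Making this last step precise is the delicate part of the argument.
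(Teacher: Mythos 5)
Your reduction to the strict henselization is a genuinely different route from the paper's, but it has the gap you yourself flag at the end, and that gap is not a technicality: it is the whole content of the proposition when $k$ is imperfect. After base change to $R^{sh}$ you still need $\Phi_{X'}^a\cong\Phi_{X'}$ over the residue field $k^{sep}$, and since no unramified extension of $R$ can reach $\bar k$, Baker's Appendix A (which assumes an algebraically closed residue field) is simply unavailable; components of $(\X_{k^{sep}})$ can still have nontrivial index divisible by $p$, so the statement you need over $k^{sep}$ is not weaker than the original proposition --- you have reduced the problem to an instance of itself. Moreover, your guess about the role of the \'etale quasi-section hypothesis is off target: it is not used to produce $k^{sep}$-points on normalizations or to control inseparable splitting of nodes. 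In the paper (following~\cite{neron}) it is a representability hypothesis: together with ``$k$ perfect'' it is one of the two alternative conditions under which the largest separated quotient $Q$ of the degree-zero part $P$ of the relative Picard functor $\PPic_{\X/R}$ exists and is a N\'eron model of the Jacobian ([Theorem 9.5/4]{neron}). Your parenthetical description of the map ($\Pic^0(X)\subseteq J(K)=\mathcal J(R)\to \mathcal J_k\to\Phi_X$, with $\Pic^{(0)}(X)$ landing in the identity component) also only addresses well-definedness; injectivity requires the converse inclusion, namely that every class reducing into the identity component lies in $\Pic^{(0)}(X)$, and in your argument that is exactly what is outsourced to the missing identification over $k^{sep}$.

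The paper avoids base change entirely. It works with $P$, $P^\circ$, and $Q$ over $R$ itself, uses the Galois-invariance of partial degrees to identify $P^\circ(R)$ with $(\ker\rho)/\Prin(\X)$ (so that the image of $P^\circ(R)$ in $P(K)$ is precisely $\Pic^{(0)}(X)$), deduces $\Phi_X^a\cong Q(R)/Q^\circ(R)$ from the surjectivity of $P(R)\to P(K)$ and bijectivity of $Q(R)\to Q(K)$, and then injects this into $Q(\bar k)/Q^\circ(\bar k)\cong\Phi_X$; the index-one case is handled by using Lemma~\ref{lem:specialdiv} to realize arbitrary partial degree vectors by classes in $P(R)$, giving surjectivity onto $\Phi_X$. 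Your final paragraph's combinatorial argument for the index-one case (via $\psi$ and $\Pic^0(G_{\bar k})$) again presupposes $\Phi_X\cong\Pic^0(G_{\bar k})$, i.e., Raynaud's theorem over $\bar k$ for a model that need not exist over any DVR with residue field $\bar k$ when $k$ is imperfect, so it inherits the same gap. If you restrict to perfect $k$, your argument can be completed (with some care that the composite is the natural map), but as written it does not prove the proposition under the stated hypotheses.
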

\begin{rem}
    The technical condition requiring $k$ to be perfect, or $\mathfrak X$ to admit an \'etale quasi-section, is just for the existence of a N\'eron model of a particular form. As discussed in~\cite[Remark 9.5/5]{neron}, the assumption that $\mathfrak X$ admits an \'etale quasi-section is satisfied if any irreducible component of $\mathfrak X_k$ is geometrically reduced (note that $\mathfrak X$ is semi-stable, so irreducible components are necessarily reduced). 
    
    Proposition~\ref{prop:arithgeocomp} is originally proved in~\cite{boschliu}.  We provide a proof here as well, which contains many of the same ideas but uses different language. For definitions and background on the material used in the following proof, see~\cite[Chapter 9]{neron}.
\end{rem}

\begin{proof}
    Let $\PPic_{\mathfrak{X}/R}$ denote the relative Picard functor of $\mathfrak{X}$ over $R$, let $P$ be the open subfunctor of $\PPic_{\mathfrak{X}/R}$ given by line bundles of total degree $0$, and let $P^\circ$ denote the connected component of $\PPic_{\mathfrak{X}/R}$ containing the identity (since total degree is locally constant, this equals the connected component of $P$ containing the identity). If we let $Q$ be the largest separated quotient of $P$, then $Q$ is a N\'eron model of the Jacobian of $X$~\cite[Theorem 9.5/4]{neron} and the projection $P\to Q$ induces an isomorphism $P_K\to Q_K$ of generic fibers~\cite[Proposition 9.5/3]{neron}. Further, the canonical map $P(R)\to P(K)$ is surjective and the map $Q(R)\to Q(K)$ is bijective~\cite[p.~268]{neron}, so we have the following commutative diagram:
    \[\begin{tikzcd}
          P^\circ(R)\arrow[twoheadrightarrow]{d}\arrow[hookrightarrow]{r}  & P(R)\arrow[twoheadrightarrow]{d}\arrow[twoheadrightarrow]{r} & P(K) \arrow[d, "\cong"]\\
          Q^\circ(R)\arrow[hookrightarrow]{r}  & Q(R)\arrow[r, "\cong"] & Q(K).
    \end{tikzcd}\]
    
    By definition, we can identify $\Pic^0(X)$ with $P(K)$. The subfunctor $P^\circ$ consists of all elements of $P$ whose partial degree on each irreducible component of $\mathfrak{X}_k\otimes_{k} \overline{k}$ is zero~\cite[Corollary 9.3/13]{neron}. 
    If an irreducible component $C_i$ of $\mathfrak{X}_k$ splits into geometric components $C_i^{(j)}$ in $\mathfrak{X}_k\otimes_{k} \overline{k}$, then $\text{Gal}(\overline{k}/k)$ permutes the geometric components transitively~\cite[04KZ]{stacks} and so any line bundle defined over $R$ must have the same partial degree on each $C_i^{(j)}$. Hence $P^\circ(R)$ consists of all elements of $P(R)$ whose partial degree on each irreducible component of $\mathfrak{X}_k$ is zero; that is, $P^\circ(R)=(\ker\rho)/\Prin(\mathfrak X)$. Hence the image of $P^\circ(R)$ under the restriction map $P(R)\to P(K)$ is
    \[(\Div(X)\cap\ker\rho)/(\Prin(X)\cap\ker\rho)= \Pic^{(0)}(X).\]
    So by following the commutative diagram, we can conclude that
    \[\Phi_X^a=\Pic^0(X)/\Pic^{(0)}(X)= P(K)/\text{im}(P^\circ(R))\cong Q(R)/Q^\circ(R).\]
    The map $Q(R)\to Q(\overline{k})\to Q(\overline{k})/Q^\circ(\overline{k})$ has kernel $Q^\circ(R)$, so $\Phi_X^a$ injects into $Q(\overline{k})/Q^\circ(\overline{k})$. Since $Q_k(\overline{k})\cong Q(\overline{k})$ is dense in $Q_k$, $Q(\overline{k})/Q^\circ(\overline{k})$ is isomorphic to the component group $\Phi_X$ of $Q_k$, proving that $\Phi^a_X$ injects into $\Phi_X$.
    
    Now assume that every component has index $1$, and consider the map
    \[P(R)\to P(\overline{k}) \xrightarrow{\pi} Q(\overline{k}).\]
    Given any element of $Q(\overline{k})$, it can be written in the form $\pi(\ell)$ for some $\ell\in P(\overline{k})$. Consider the degree of $\ell$ on each component of $\mathfrak X_k$. By Lemma~\ref{lem:specialdiv}, there exists an element of $P(R)$ mapping to some $\ell'\in P(\overline{k})$ that has the same degree as $\ell$ on each component. This proves that $\ell-\ell'\in P^\circ(\overline{k})$, and hence $\pi(\ell)-\pi(\ell')\in Q^\circ(\overline{k})$, so the map $P(R)\to Q(\overline{k})/Q^\circ(\overline{k})$ is a surjection. Since this map factors through $Q(R)$, the injection 
    \[\Phi^a_X\cong Q(R)/Q^\circ(R)\to Q(\overline{k})/Q^\circ(\overline{k})\cong \Phi_X\]
    is also surjective.
\end{proof}

This allows us to prove the variant of Raynaud's Theorem stated in the introduction, copied below. Recall that we do not assume $R$ is henselian, and $k$ need not be algebraically closed.

\raynaud*

\begin{proof}
    First consider the case that $R$ is henselian. By assuming all components $C_i$ of $\mathfrak{X}_k$ have a smooth $k$-point, we guarantee that $\ind(C_i)=1$. Hence $\Div_u^0(G)=0$, and so by Theorem~\ref{thm:arithraynaud}, we have $\Phi^a_X\cong\Pic^0(G)$. Since index $1$ implies geometrically reduced~\cite[Corollary 9.1/8]{neron}, we can apply Proposition~\ref{prop:arithgeocomp} to conclude that $\Phi_X\cong\Pic^0(G)$.
    
    In the case that $R$ is not henselian, let $R^{h}$ denote its henselization, let $\mathfrak{X}^{h}$ denote the base change of $\mathfrak{X}$ to $R^{h}$, and let $X^h$ denote the generic fibre of $\mathfrak{X}^h$. The residue fields of $R$ and $R^{h}$ are the same, so $(\mathfrak{X}^h)_k\cong\mathfrak{X}_k$ and the dual graph is unaffected by this base change. The extension $K^h/K$ is an \'etale field extension since it is separable. The formation of N\'eron models commutes with \'etale base change, so letting $Q$ denote the N\'eron model of the Jacobian of $X$, $Q_{R^h}$ is a N\'eron model for the Jacobian of $\mathfrak{X}^{h}$. But then $(Q_{R^h})_k\cong Q_k$, and so their component groups $\Phi_{X^h}$ and $\Phi_X$ are isomorphic. Applying the result in the henselian case, we have $\Phi_X\cong \Phi_{X^h}\cong \Pic^0(G)$.
\end{proof}

\end{document}